 \newtheorem{thm}{Theorem}[section]
 \newtheorem{cor}[thm]{Corollary}
 \newtheorem{lem}[thm]{Lemma}
 \newtheorem{prop}[thm]{Proposition}
 \theoremstyle{definition}
 \newtheorem{defn}[thm]{Definition}
 \newtheorem{ex}[thm]{Example}
 \theoremstyle{remark}
 \newtheorem{rem}[thm]{Remark}
 \numberwithin{equation}{section}
\begin{document}

%
%
%
%
%
%
%
%
%

\title[Characterization of (weak) phase  retrieval dual frames]
 {Characterization of (weak) phase  retrieval dual frames}

\author[F. Arabyani-Neyshaburi]{Fahimeh  Arabyani-Neyshaburi$^{1}$}
\email{f.arabiani@um.ac.ir, fahimeh.arabyani@gmail.com}

\author[A. Arefijamaal] {Ali Akbar Arefijamaal$^{2}$} 
\email{arefijamaal@hsu.ac.ir;arefijamaal@gmail.com}

\author[R.Kamyabi-Gol] {Rajab Ali  Kamyabi-Gol$^{3,*}$} \let\thefootnote\relax\footnotetext{$^{*}$ Corresponding author.}
\email{kamyabi@um.ac.ir}

\maketitle
\vspace{-1.5cm}
\begin{center}
$^{1}$ Department of  Mathematical sciences, Ferdowsi University of Mashhad, Mashhad, Iran.\\
$^{2}$ Department of Mathematics and Computer Sciences, Hakim Sabzevari University, Sabzevar, Iran.\\
 $^{3}$ Department of Mathematical sciences, Faculty of Math, Ferdowsi University of Mashhad and Center of Excellence in Analysis on Algebraic Structures (CEAAS), Mashhad, Iran. Email: \email{kamyabi@um.ac.ir} \\
\end{center}



\vspace{1.7cm}
\begin{abstract}
Recovering a signal up to a unimodular constant from the magnitudes of linear measurements has been popular and well studied in recent years.
However,  numerous unsolved problems regarding phase retrieval still exist. Given a phase retrieval frame, may the family of phase retrieval dual frames be classified? And  is such a family  dense in the set of dual frames? Can we present the equivalent conditions for a family of vectors to do weak phase retrieval in complex Hilbert space case? What is the connection between phase, weak phase and norm retrieval?
In this context, we aim to deal with these open problems concerning   phase  retrieval dual frames,   weak phase  retrieval frames,
and specially  investigate equivalent conditions for identifying these features.
We provide some characterizations  of   alternate dual frames of a phase retrieval frame which yield phase retrieval in finite dimensional Hilbert spaces. Moreover, for some classes of frames,  we show that the family of  phase retrieval dual frames  is open and    dense in the set of  dual frames.
 Then, we study weak phase retrieval  problem. Among other things, we     obtain  some  equivalent conditions on a family of vectors to do    phase  retrieval in terms of weak phase retrieval.
\end{abstract}

\maketitle

%

\textbf{Mathematics Subject Classification 2020.} Primary 42C15; Secondary 41A58.

\textbf{Keywords.} Phase retrieval, weak phase retrieval, dual frames.

\maketitle

\section{Introduction and Preliminaries}

\smallskip
\goodbreak
Signal reconstruction without using phase is a longstanding conjecture, specially with regard to speech recognition systems which  was first introduced  from mathematical  point of view by  R. Balan,  P. Casazza and D. Edidin in \cite{Casazza}, and has been a very popular topic in recent years  due to so many applications of  $X$-ray, electron microscopy,  optics, image processing and much more \cite{Balan, Efriam,Van, Wang}.
To present the problem in a more precise approach,  we first  briefly state some basic definitions and preliminaries  in relevant areas. Then  we propound a number of problems to address phase retrieval frames in some  new aspects.

Throughout this paper, we suppose that $\mathcal{H}$ denotes a separable Hilbert space,  $\mathcal{H}_{n}$ denote an $n$-dimensional  real or complex Hilbert space  and  we use $\mathbb{R}^{n}$ and $\mathbb{C}^{n}$ whenever it is necessary  to differentiate between the two.  Also, we consider the notations; $I_{m}=\{1, 2, ..., m\}$ and     $\{\delta_{i}\}_{i\in I_{n}}$ as the standard orthonormal basis of $\mathcal{H}_{n}$.

A family of vectors
$\Phi:=\{\phi_{i}\}_{i\in I}$ in   $\mathcal{H}$ is called a  \textit{frame}  if there exist the constants
 $0<A_{\Phi}\leq B_{\Phi}<\infty$ such that
\begin{eqnarray}\label{Def frame}
A_{\Phi}\|f\|^{2}\leq \sum_{i\in I}\vert f,\phi_{i}\rangle\vert^{2}\leq
B_{\Phi}\|f\|^{2},\qquad (f\in \mathcal{H}).
\end{eqnarray}
The constants $A_{\Phi}$ and $B_{\Phi}$ are called \textit{frame bounds}. The sequence  $\{\phi_{i}\}_{i\in I}$ is said to be  a \textit{Bessel sequence} whenever the right hand side of  (\ref{Def frame})  holds.
 A frame
 $\{\phi_{i}\}_{i\in I}$ is called   $A$-\textit{tight frame} if $A=A_{\Phi}=B_{\Phi}$, and in the case of $A_{\Phi}=B_{\Phi}=1$ it is called a  \textit{Parseval frame}.
Given a
frame $\Phi=\{\phi_{i}\}_{i\in I}$, its Grammian matrix formed by the inner product of the frame vectors is as $G_{\Phi}=[\langle \phi_{i},\phi_{j}\rangle]_{i,j}$.
The \textit{frame operator} is defined by
$S_{\Phi}f=\sum_{i\in I}\langle f,\phi_{i}\rangle \phi_{i}$.
 It is a bounded, invertible, and self-adjoint
operator \cite{Chr08}.
Also, the \textit{synthesis operator} $T_{\Phi}: l^{2}\rightarrow \mathcal{H}$ is defined  by $T_{\Phi}\lbrace c_{i}\rbrace= \sum_{i\in I} c_{i}\phi_{i}$. The frame operator can be written as $S_{\Phi}= T_{\phi}T_{\phi}^{*}$ where $T_{\Phi}^{*}: \mathcal{H}\rightarrow l^{2}$, the adjoint of $T$, given by $T_{\Phi}^{*}f= \lbrace \langle f,\phi_{i}\rangle\rbrace_{i\in I}$  is called the \textit{analysis operator}.
The family $\{S_{\Phi}^{-1}f_{i}\}_{i\in I}$ is also a frame for $\mathcal{H}$,   called the \textit{canonical dual} frame.
In general, a frame $\{\psi_i\}_{i\in I}\subseteq\mathcal{H}$ is called an \textit{alternate dual} or simply a \textit{dual}
 for   $\{\phi_{i}\}_{i\in I}$ if
$
f=\sum_{i\in I}\langle f,\psi_{i}\rangle \phi_i ,
$
for $f\in
\mathcal{H}$.
All frames have at least a dual, the canonical dual, and redundant frames have an  infinite number of  alternate  dual frames. We denote the excess of a frame $\Phi$ by $E(\Phi)$.
It is  known that  every dual frame is of the form  $\{S_{\Phi}^{-1}\phi_{i}+u_{i}\}_{i\in I}$, where   $\{u_{i}\}_{i\in I}$ is a Bessel sequence that  satisfies
$
\sum_{i\in I}\langle f,u_{i}\rangle \phi_{i}=0,
$
for all $f\in
\mathcal{H}$. Also recall that, two frames $\Phi$ and $\Psi$ are equivalent if there exists an invertible operator $U$ on $\mathcal{H}$ so that $\Psi=U\Phi$.
 See \cite{Chr08, H} for more detailed information on frame theory and \cite {Aa2018, AAS, Bol98,   lopez, Kovaevi, Saliha} for the importance of duality principle.

Consider a frame  $\{\phi_{i}\}_{i\in I_{m}}$ in a Hilbert space $\mathcal{H}_{n}$.   A finite set of indices $\sigma\subset I_{m}$ satisfies the minimal redundancy condition (MRC)  whenever $\{\phi_{i}\}_{i\in \sigma^{c}}$ remains to be a  frame for $\mathcal{H}_{n}$. Furthermore,  we say $\Phi$ satisfies MRC for $r$-erasures if every subset $\sigma\subset I_{m}$, $\vert \sigma\vert=r$ satisfies MRC for $\Phi$.  The spark of a matrix is the size of the smallest linearly dependent subset of the columns and the spark of a family $\{\phi_{i}\}_{i\in I_{m}}$  is defined as the spark of its synthesis matrix $\Phi$. Also, the family $\{\phi_{i}\}_{i\in I_{m}}$, $m\geq n$ is said   full spark if it has the spark $n+1$. It is shown   that if $m\geq n$, then the set of full spark frames is open and dense in the set of all frames \cite{Alexeev}.

Suppose now the nonlinear mapping
\begin{eqnarray}\label{Phase map}
\mathbb{M}_{\Phi}: \mathcal{H}\rightarrow l^{2}(I), \qquad \mathbb{M}_{\Phi}(f)=\lbrace \vert\langle f,\phi_{i}\rangle\vert^{2}\rbrace_{i\in I}
\end{eqnarray}
obtained by taking the absolute value element wise of the analysis operator. Let us denote by $\mathbb{H}=\mathcal{H}/\sim$   considered by identifying two vectors which are different in a phase factor, i.e., $f \sim g$ whenever there exists a scalar $\theta$ with $\vert \theta \vert=1$  so that $g=\theta f$.
Obviously in a real Hilbert space  we have  $\mathbb{H}=\mathcal{H}/\{1,-1\}$ and in the complex case $\mathbb{H}=\mathcal{H}/\mathbb{T}$, where $\mathbb{T}$ is the complex unit circle. So, the mapping $\mathbb{M}_{\Phi}$ can be extended to $\mathbb{H}$ as $\mathbb{M}_{\Phi}(\hat{f})=\lbrace \vert\langle f,\phi_{i}\rangle\vert^{2}\rbrace_{i\in I}$, $f\in \hat{f}=\{g\in \mathcal{H}: g \sim f\}$.
The injectivity of the nonlinear mapping $\mathbb{M}_{\Phi}$   leads to the reconstruction of every signal  in $\mathcal{H}$ up to a constant phase factor from the modula   of its frame coefficients.
In \cite{Casazza}, the authors investigated the injectivity of   $\mathbb{M}_{\Phi}$ in finite dimensional real Hilbert spaces and moreover, it was proven that $4n-2$ measurements suffice for the  injectivity in $n$-dimensional complex Hilbert spaces. Indeed, the injectivity of the mapping $\mathbb{M}_{\Phi}$ is equivalent to the following definition:

\begin{defn}\label{1}
A family of vectors $\Phi=\{\phi_{i}\}_{i\in I}$ in $\mathcal{H}$ does phase  retrieval if whenever
$f,g\in \mathcal{H}$ satisfy
\begin{eqnarray}\label{121}
\vert \langle f,\phi_{i}\rangle\vert=\vert \langle g,\phi_{i}\rangle\vert, \qquad
 (i\in I)
\end{eqnarray}
then there exists a scalar $\theta$ with $\vert \theta\vert =1$ so that $f= \theta g$.
\end{defn}
If for every
$f,g\in \mathcal{H}$, which satisfy $(\ref{121})$ we get
$\Vert f\Vert =\Vert g\Vert$, then it said $\Phi$ to do  norm retrieval.
Clearly, if $\Phi$ does phase (norm)  retrieval, then so does $\alpha_{i}f_{i}$ for every $0<\alpha_{i}$, $i\in I$. Also, tight frames do norm retrieval.
Moreover,  phase retrieval implies norm retrieval, but the converse fails. For example every  orthonormal basis does norm retrieval, but fails at  phase retrieval.

The following result states that for two equivalent frames $\Phi$ and $\psi$, the injectivity of $\mathbb{M}_{\Phi}$ and  $\mathbb{M}^{\Psi}$ are the same.

\begin{thm}\label{1.3}\cite{Casazza}
A family $\Phi=\{\phi_{i}\}_{i\in I}$  in $\mathcal{H}$ does phase retrieval if and only if $\{U\phi_{i}\}_{i\in I}$ does phase retrieval for every invertible operator $U$ on $\mathcal{H}$.\end{thm}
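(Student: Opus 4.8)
The plan is to reduce everything to the adjoint identity
\begin{eqnarray*}
\langle f, U\phi_{i}\rangle = \langle U^{*}f, \phi_{i}\rangle, \qquad (f\in\mathcal{H},\ i\in I),
\end{eqnarray*}
which converts the phaseless measurements of the transformed family $\{U\phi_{i}\}_{i\in I}$ into phaseless measurements of the original frame $\Phi$ evaluated at transformed signals. Since an invertible operator has an invertible adjoint, with $(U^{*})^{-1}=(U^{-1})^{*}$, this identity sets up a bijection between the two phaseless reconstruction problems, and the theorem should fall out almost immediately.

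First I would establish the substantive direction. Assume $\Phi$ does phase retrieval and fix an invertible operator $U$ on $\mathcal{H}$. Suppose $f,g\in\mathcal{H}$ satisfy $\vert\langle f, U\phi_{i}\rangle\vert=\vert\langle g, U\phi_{i}\rangle\vert$ for all $i\in I$. By the adjoint identity this is exactly $\vert\langle U^{*}f, \phi_{i}\rangle\vert=\vert\langle U^{*}g, \phi_{i}\rangle\vert$ for all $i\in I$. Because $\Phi$ does phase retrieval, there is a unimodular scalar $\theta$ with $U^{*}f=\theta\,U^{*}g$. Applying $(U^{*})^{-1}$ to both sides gives $f=\theta g$, which is precisely phase retrieval for $\{U\phi_{i}\}_{i\in I}$.

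The converse is immediate: if $\{U\phi_{i}\}_{i\in I}$ does phase retrieval for \emph{every} invertible $U$, then specializing to $U=I$ recovers phase retrieval for $\Phi$ itself. In fact the argument is symmetric, so for each fixed invertible $U$ one has that $\Phi$ does phase retrieval if and only if $U\Phi$ does; to run it backwards one replaces $U$ by $U^{-1}$ and $f,g$ by $Uf,Ug$.

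I do not expect a genuine obstacle here. The only points deserving a word of care are verifying that $U^{*}$ is invertible so that the final cancellation $f=\theta g$ is legitimate, and observing that $\{U\phi_{i}\}_{i\in I}$ is again a frame, so that phase retrieval is well posed for it; the latter is standard, since an invertible $U$ preserves the frame inequalities in $(\ref{Def frame})$. All the real content lies in recognizing the adjoint identity and the invertibility of $U^{*}$.
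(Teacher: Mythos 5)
Your argument is correct and is exactly the standard one: the paper itself states this result as a citation to \cite{Casazza} and gives no proof, and the proof in that reference proceeds via the same adjoint identity $\langle f, U\phi_{i}\rangle=\langle U^{*}f,\phi_{i}\rangle$ together with the invertibility of $U^{*}$. The only cosmetic remark is that the frame property of $\{U\phi_{i}\}_{i\in I}$ is not actually needed, since phase retrieval as defined in Definition \ref{1} makes sense for an arbitrary family of vectors; otherwise nothing is missing.
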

Applying the above theorem, shows that a frame does phase retrieval if and only if  its canonical dual   does phase retrieval.
\begin{defn}\label{2}\cite{Casazza}
A family of vectors $\Phi=\{\phi_{i}\}_{i\in I}$ in  $\mathcal{H}$ has the complement property if for every $\sigma \subset I$ either $\overline{span}\{\phi_{i}\}_{i\in \sigma}=\mathcal{H}$ or $\overline{span}\{\phi_{i}\}_{i\in \sigma^{c}}=\mathcal{H}$.
\end{defn}
  As we stated, a fundamental  classification of frames which do phase retrieval was presented for finite dimensional real case in \cite{Casazza} and then for infinite dimensional case  in \cite{cahill}  as follows:
 \begin{thm}\label{thm1}
A family $\Phi=\{\phi_{i}\}_{i\in I}$  in a real Hilbert space $\mathcal{H}$ does phase retrieval if and only if it has the complement property.
\end{thm}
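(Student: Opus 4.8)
The plan is to prove both implications directly, exploiting the special feature of the real case: the hypothesis $(\ref{121})$, namely $|\langle f,\phi_i\rangle| = |\langle g,\phi_i\rangle|$ for all $i$, is equivalent to saying that for each index $i$ one has $\langle f,\phi_i\rangle = \pm\langle g,\phi_i\rangle$. The entire argument then reduces to bookkeeping of these signs together with the elementary fact that a \emph{proper closed} subspace of a Hilbert space has a nonzero orthogonal complement.

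For sufficiency, assume $\Phi$ has the complement property and suppose $f,g\in\mathcal{H}$ satisfy $(\ref{121})$. First I would partition the index set as $\sigma = \{i\in I : \langle f,\phi_i\rangle = \langle g,\phi_i\rangle\}$ and $\sigma^c = \{i : \langle f,\phi_i\rangle = -\langle g,\phi_i\rangle\}$; by the real-case observation every index lands in exactly one of these sets. The key step is that $\langle f-g,\phi_i\rangle = 0$ for $i\in\sigma$ and $\langle f+g,\phi_i\rangle = 0$ for $i\in\sigma^c$, so that $f-g \perp \overline{span}\{\phi_i\}_{i\in\sigma}$ and $f+g \perp \overline{span}\{\phi_i\}_{i\in\sigma^c}$. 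Applying the complement property to this particular $\sigma$: if $\overline{span}\{\phi_i\}_{i\in\sigma} = \mathcal{H}$ then $f-g=0$, while if $\overline{span}\{\phi_i\}_{i\in\sigma^c} = \mathcal{H}$ then $f+g=0$. In either case $f = \pm g$, which is exactly phase retrieval in the real setting.

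For necessity I argue by contraposition, constructing an explicit counterexample when the complement property fails. Failure produces an index set $\sigma$ for which both $\overline{span}\{\phi_i\}_{i\in\sigma}$ and $\overline{span}\{\phi_i\}_{i\in\sigma^c}$ are proper closed subspaces, so I may choose nonzero vectors $u$ and $v$ with $u \perp \overline{span}\{\phi_i\}_{i\in\sigma}$ and $v \perp \overline{span}\{\phi_i\}_{i\in\sigma^c}$. Setting $f = u+v$ and $g = u-v$, a direct check gives $\langle f,\phi_i\rangle = -\langle g,\phi_i\rangle$ for $i\in\sigma$ (since $u\perp\phi_i$ there) and $\langle f,\phi_i\rangle = \langle g,\phi_i\rangle$ for $i\in\sigma^c$ (since $v\perp\phi_i$ there), so $(\ref{121})$ holds for every $i$. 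Yet $f\neq\pm g$, because $f=g$ would force $v=0$ and $f=-g$ would force $u=0$, each contradicting the choice of $u,v$. Hence $\Phi$ fails phase retrieval, completing the contrapositive.

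The only point requiring genuine care, and the sole place where the passage from the finite-dimensional case to general real $\mathcal{H}$ is not purely formal, is the use of orthogonal complements: one must work with the \emph{closed} span in the statement of the complement property, precisely so that a proper subspace is guaranteed to admit a nonzero orthogonal vector in both directions of the argument. With closed spans built into the definition this is immediate, and no further infinite-dimensional subtlety arises.
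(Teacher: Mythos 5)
Your proof is correct. Note, however, that the paper offers no proof of this statement at all: it is quoted as a known result, with the finite-dimensional case attributed to \cite{Casazza} and the infinite-dimensional case to \cite{cahill}, and your sign-splitting argument (partitioning $I$ according to whether $\langle f,\phi_i\rangle$ equals $+\langle g,\phi_i\rangle$ or $-\langle g,\phi_i\rangle$, then testing $f-g$ and $f+g$ against the two closed spans, and conversely building $f=u+v$, $g=u-v$ from orthogonal witnesses when the property fails) is precisely the standard proof given in those references, with the closed-span point correctly identified as the only infinite-dimensional issue. The lone cosmetic quibble is the phrase ``every index lands in exactly one of these sets'': an index with $\langle f,\phi_i\rangle=\langle g,\phi_i\rangle=0$ satisfies both sign conditions, but since you define $\sigma^c$ as the literal complement of $\sigma$ the partition and the subsequent orthogonality relations are unaffected.
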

As an  immediate result of the above theorem,  every phase retrieval frame in real Hilbert space $\mathcal{H}$ is satisfied in MRC for $(n-1)$-erasures.

\begin{prop}\cite{Casazza}
If  $\Phi=\{\phi_{i}\}_{i\in I_{m}}$  does phase retrieval in $\mathbb{R}^{n}$, then $m\geq 2n-1$. If $m\geq 2n-1$ and $\Phi$ is full spark then $\mathcal{\phi}$  does phase retrieval. Moreover, $\{\phi_{i}\}_{i\in I_{2n-1}}$  does phase retrieval if and only if $\Phi$ is full spark.\end{prop}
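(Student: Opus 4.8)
The whole argument reduces to Theorem \ref{thm1}: in a real Hilbert space, doing phase retrieval is equivalent to the complement property, so I would translate each of the three assertions into a statement about spans of subfamilies and settle them by a counting argument together with two elementary linear-algebra facts — that fewer than $n$ vectors can never span $\mathbb{R}^{n}$, while any $n$ vectors taken from a full spark family are linearly independent and hence \emph{do} span $\mathbb{R}^{n}$. For the lower bound $m\geq 2n-1$ the plan is to argue the contrapositive. If $m\leq 2n-2$, I can partition $I_{m}$ into two blocks $\sigma$ and $\sigma^{c}$ with $|\sigma|\leq n-1$ and $|\sigma^{c}|\leq n-1$. Then neither $\{\phi_{i}\}_{i\in\sigma}$ nor $\{\phi_{i}\}_{i\in\sigma^{c}}$ spans $\mathbb{R}^{n}$, so $\Phi$ fails the complement property and, by Theorem \ref{thm1}, fails phase retrieval. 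Contrapositively, phase retrieval forces $m\geq 2n-1$.

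For the sufficiency I assume $m\geq 2n-1$ and that $\Phi$ is full spark, and I verify the complement property directly. Fixing any $\sigma\subset I_{m}$, the identity $|\sigma|+|\sigma^{c}|=m\geq 2n-1$ forces at least one block to have size $\geq n$ (otherwise both are $\leq n-1$ and $m\leq 2n-2$). Say $|\sigma|\geq n$; since any $n$-element subfamily of a full spark system is linearly independent, $\{\phi_{i}\}_{i\in\sigma}$ spans $\mathbb{R}^{n}$. As $\sigma$ was arbitrary the complement property holds, and Theorem \ref{thm1} delivers phase retrieval.

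It then remains to prove the ``moreover'' equivalence in the boundary case $m=2n-1$. One direction is just the special case $m=2n-1$ of the sufficiency already established. For the converse I again argue contrapositively: if $\Phi$ is \emph{not} full spark, there is a subset $\tau$ with $|\tau|=n$ that is linearly dependent, so $\{\phi_{i}\}_{i\in\tau}$ spans a proper subspace of $\mathbb{R}^{n}$; its complement satisfies $|\tau^{c}|=(2n-1)-n=n-1<n$ and therefore also fails to span. The complement property fails, and by Theorem \ref{thm1} so does phase retrieval.

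The only step requiring genuine care — and the closest thing to an obstacle — is the characterisation of full spark that I use throughout, namely that spark equal to $n+1$ is the same as every $n$-element subfamily being linearly independent. This holds because any linearly dependent subset of size $k<n$ could be enlarged to an $n$-element subset (using $m\geq n$), which would then be dependent, contradicting the independence of all $n$-subsets; hence once all $n$-subsets are independent, all smaller ones are too. With this equivalence in place, the remainder is nothing more than the pigeonhole count $|\sigma|+|\sigma^{c}|=m$ and the dimension bound on spans, so no real difficulty remains after Theorem \ref{thm1} is invoked.
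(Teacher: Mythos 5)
Your argument is correct and is essentially the standard proof of this result: the paper itself states the proposition as a cited fact from Balan--Casazza--Edidin without proof, and the route you take --- reducing all three assertions to the complement property via Theorem \ref{thm1}, then settling them by the pigeonhole count $|\sigma|+|\sigma^{c}|=m$ together with the fact that full spark makes every $n$-subset a spanning set --- is exactly the argument in that source. The care you take with the equivalence between spark $n+1$ and independence of all $n$-subsets, and with extending a small dependent set to a dependent $n$-subset in the converse of the boundary case, closes the only points where a gap could arise.
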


Two vectors $x=\{x_{i}\}_{i\in I_{n}}$ and $y=\{y_{i}\}_{i\in I_{n}}$ in $\mathcal{H}_{n}$  weakly have the same phase if there is
a $\vert \alpha\vert=1$ so that $phase (x_{i})=\alpha phase (y_{i})$, for all $i\in I_{n}$ which $x_{i}\neq 0 \neq y_{i}$.

\begin{defn}A family $\Phi=\{\phi_{i}\}_{i\in I_{m}}$  in $\mathcal{H}_{n}$  does weak phase retrieval if for any $x, y \in \mathcal{H}_{n}$ with $\vert \langle x,\phi_{i}\rangle\vert=\vert \langle y,\phi_{i}\rangle\vert$ for all $i\in I_{m}$, then  $x$ and $y$ weakly have the same phase.\end{defn}
It is shown that if $\Phi=\{\phi_{i}\}_{i\in I_{m}}$   does weak phase retrieval in $\mathbb{R}^{n}$, then $m\geq 2n-2$ \cite{Casazza22}. Also, clearly phase retrieval implies weak phase retrieval property, although the converse does not hold in general. As a simple example $\{(1,1), (1,-1)\}$ does weak phase retrieval for $\mathbb{R}^{2}$,  see\cite{Akrami}, but clearly does not phase retrieval.

Now, we state the concept of a generic set; A subset $\Omega\subseteq \mathbb{R}^{n}$ is called generic whenever there exists a nonzero polynomial $p(x_{1},...,x_{n})$ so that
\begin{equation*}
\Omega^{c}\subseteq \{(x_{1},...,x_{n})\in \mathbb{R}^{n} : p(x_{1},...,x_{n})=0\}.
\end{equation*}
 It is known that   generic sets are open, dense and full measure. Furthermore, a generic set in $\mathbb{C}^{n}$ is defined as a generic set in $\mathbb{R}^{2n}$.

 Applying  the linear operator introduced in \cite{Balan1}  gives an equivalent condition for injectivity of $\mathbb{M}_{\Phi}$, defined by $(\ref{Phase map})$. More precisely, let $\{\phi_{i}\}_{i\in I_{m}}$ be a family of vectors in $\mathbb{C}^{n}$ and $H_{n\times n}$ denotes the space of all $n\times n$ Hermitian matrices. Consider the operator $\Lambda_{\Phi}:H_{n\times n}\rightarrow \mathbb{R}^{m}$ by $\Lambda_{\Phi}(A)=\{\langle A, \phi_{i}\otimes \phi_{i}\rangle\}_{i\in I_{m}}$, in which $\phi_{i}\otimes \phi_{i}$ is the rank one projection onto $span\{\phi_{i}\}$. It is easily proven that $\Lambda_{\Phi}(f\otimes f)=\mathbb{M}_{\Phi}(f)$ and this equality has a key role  for characterizing  the injectivity of $\mathbb{M}_{\Phi}$, \cite{Balan1, thesis}, See also \cite{hasankhani}.
 So, we get the following equivalent conditions for a frame to do phase  retrieval.
 \begin{cor}
Let $\Phi=\{\mathcal{\phi}_{i}\}_{i\in I_{m}}$ be a frame in $\mathcal{H}_{n}$, then the followings are equivalent;
\begin{itemize}
\item[(i)]
$\Phi$ does phase retrieval.
\item[(ii)] $\mathbb{M}_{\Phi}$ is injective.
\item[(iii)] $\Lambda_{\Phi}\vert_{B_{1}}$ is injective, where $B_{1}$ denotes rank one matrices.
\item[(iv)] There exists no rank 2 matrix in the null space of $\Lambda_{\Phi}$.
\end{itemize}
\end{cor}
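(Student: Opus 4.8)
The plan is to treat $(i)\Leftrightarrow(ii)$ as an unwinding of the definitions and to route the remaining equivalences through the lifting identity $\Lambda_{\Phi}(f\otimes f)=\mathbb{M}_{\Phi}(f)$ recorded above, using throughout that $\Phi$ is a frame and hence spanning. For $(i)\Leftrightarrow(ii)$: by Definition \ref{1}, $\Phi$ does phase retrieval exactly when the equalities $(\ref{121})$, i.e. $\mathbb{M}_{\Phi}(f)=\mathbb{M}_{\Phi}(g)$, force $f=\theta g$ with $|\theta|=1$, that is $\hat f=\hat g$ in $\mathbb{H}$; this is precisely the statement that the extension of $\mathbb{M}_{\Phi}$ to $\mathbb{H}$ is injective. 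No computation is needed here.

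For $(ii)\Leftrightarrow(iii)$ I would first observe that the assignment $\hat f\mapsto f\otimes f$ is a well-defined bijection from $\mathbb{H}$ onto the set $B_{1}$ of (positive) rank-one matrices: indeed $f\otimes f=g\otimes g$ holds if and only if $f=\theta g$ for some $|\theta|=1$, i.e. $f\sim g$. Under this bijection the identity $\Lambda_{\Phi}(f\otimes f)=\mathbb{M}_{\Phi}(f)$ says that $\mathbb{M}_{\Phi}$ factors as $\Lambda_{\Phi}$ composed with $\hat f\mapsto f\otimes f$; since the latter is a bijection, injectivity of $\mathbb{M}_{\Phi}$ on $\mathbb{H}$ is equivalent to injectivity of $\Lambda_{\Phi}$ restricted to $B_{1}$, which is $(iii)$.

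The substantive step is $(iii)\Leftrightarrow(iv)$. Suppose $(iii)$ fails: there are $f\not\sim g$ with $\Lambda_{\Phi}(f\otimes f)=\Lambda_{\Phi}(g\otimes g)$, so the Hermitian matrix $Q:=f\otimes f-g\otimes g$ is a nonzero element of the null space of $\Lambda_{\Phi}$. Here I would invoke the frame hypothesis to rule out degeneracy: if $g=cf$ were a scalar multiple, then $|\langle f,\phi_{i}\rangle|=|\langle g,\phi_{i}\rangle|$ for all $i$ together with spanning would force $|c|=1$, hence $f\sim g$, a contradiction; thus $f,g$ are linearly independent and $Q$ has rank exactly $2$, so $(iv)$ fails. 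Conversely, let $Q$ be a rank-$2$ Hermitian matrix with $\Lambda_{\Phi}(Q)=0$, and write $Q=\lambda_{1}\,u_{1}\otimes u_{1}+\lambda_{2}\,u_{2}\otimes u_{2}$ with $u_{1},u_{2}$ orthonormal and $\lambda_{1},\lambda_{2}$ nonzero real. If $\lambda_{1},\lambda_{2}$ had the same sign, then the coordinatewise identity $\lambda_{1}|\langle u_{1},\phi_{i}\rangle|^{2}+\lambda_{2}|\langle u_{2},\phi_{i}\rangle|^{2}=0$ would be a vanishing sum of terms of a single sign, forcing $\langle u_{1},\phi_{i}\rangle=\langle u_{2},\phi_{i}\rangle=0$ for all $i$ and hence $u_{1}=u_{2}=0$ by spanning, contradicting $\mathrm{rank}\,Q=2$. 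Therefore $\lambda_{1},\lambda_{2}$ have opposite signs, and putting $f=\sqrt{|\lambda_{1}|}\,u_{1}$, $g=\sqrt{|\lambda_{2}|}\,u_{2}$ gives $Q=f\otimes f-g\otimes g$ with $f,g$ independent, whence $f\not\sim g$ and $\Lambda_{\Phi}(f\otimes f)=\Lambda_{\Phi}(g\otimes g)$, so $(iii)$ fails.

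I expect the signature dichotomy in the converse of $(iii)\Leftrightarrow(iv)$ to be the only delicate point. A naive reading of $(iv)$ would also admit definite rank-$2$ matrices, of signature $(2,0)$ or $(0,2)$, which are \emph{not} differences of rank-one projections; but the frame property precisely excludes such matrices, as well as all rank-$1$ matrices, from the null space of $\Lambda_{\Phi}$, so that the presence of a rank-$2$ matrix in the kernel coincides with the presence of a genuine difference $f\otimes f-g\otimes g$ obstructing injectivity. The real and complex cases are handled uniformly, the only change being $\{1,-1\}$ versus $\mathbb{T}$ in the definition of $\sim$.
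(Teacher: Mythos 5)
Your proof is correct, and for the step involving $(iv)$ it takes a genuinely more self-contained route than the paper. The equivalences $(i)\Leftrightarrow(ii)$ and $(ii)\Leftrightarrow(iii)$ are handled the same way in both arguments: unwinding Definition \ref{1}, and factoring $\mathbb{M}_{\Phi}$ through the bijection $\hat f\mapsto f\otimes f$ onto $B_{1}$ (the paper phrases the latter in terms of composing left inverses with $\gamma$ and $\gamma^{-1}$, which is the same content; note the paper even slightly misstates $\gamma$ as invertible on $\mathcal{H}_{n}$ rather than on $\mathbb{H}$, a point you get right). The difference is in how $(iv)$ enters: the paper proves $(ii)\Leftrightarrow(iv)$ by observing that the frame property keeps nonzero rank-one matrices out of $\ker\Lambda_{\Phi}$ and then citing Lemma 5.5 of Balan's paper as a black box, whereas you prove $(iii)\Leftrightarrow(iv)$ from scratch: the forward direction by showing $f\otimes f-g\otimes g$ has rank exactly $2$ once the frame property rules out the proportional and rank-one degeneracies, and the converse by spectral decomposition plus the signature dichotomy, again using spanning to exclude definite rank-$2$ matrices from the kernel. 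In effect you have reproved the cited lemma, which makes the corollary self-contained at the cost of a slightly longer argument; your closing remark correctly identifies the signature dichotomy as the only delicate point, and it is precisely what the external lemma encapsulates. The one spot to tighten is the phrase ``thus $f,g$ are linearly independent'': failure of $g=cf$ does not by itself exclude $f=0$, though your own observation that the frame property bars rank-one matrices from $\ker\Lambda_{\Phi}$ disposes of that case, so no real gap remains.
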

\begin{proof}
$(i) \Leftrightarrow  (ii)$ is obvious by the definition. For $(ii) \Leftrightarrow  (iv)$ we
just note that, due to the completeness of $\Phi$ in $\mathcal{H}_{n}$, the rank one matrices  $B_{1}=\{f\otimes f : \quad f\in \mathcal{H}_{n}\}$ can not be in the null space of $\Lambda_{\Phi}$. Indeed, $\Lambda_{\Phi}(f\otimes f)=\mathbb{M}_{\Phi}(f)=0$ implies that $f\perp \phi_{i}$, for all $i\in I_{m}$ and so $f=0$. So, as a result of  Lemma 5.5 of  \cite{Balan1}, the injectivity of $\mathbb{M}_{\Phi}$ is equivalent to the statement that; there is  no rank 2 matrix in the null space of $\Lambda_{\Phi}$. On the other hand, the injectivity of $\mathbb{M}_{\Phi}$ along with the fact that the mapping $\gamma: \mathcal{H}_{n}\rightarrow B_{1}$, $\gamma(f)=f\otimes f$ is invertible, deduce that the linear operator $\Lambda_{\Phi}\vert_{B_{1}}$ is also injective. Moreover, for a left inverse $\mathbb{L}_{\Phi}$ of $\mathbb{M}_{\Phi}$, the mapping $\gamma\mathbb{L}_{\Phi}$ is a left inverse for $\Lambda_{\Phi}\vert_{B_{1}}$. Conversely,  for a left inverse $\Gamma_{\Phi}$ of $\Lambda_{\Phi}\vert_{B_{1}}$, the mapping $\gamma^{-1}\Gamma_{\Phi}$ is a left inverse for $\mathbb{M}_{\Phi}$, this completes the proof of $(ii) \Leftrightarrow  (iii)$.
\end{proof}
This  paper is organized as follows: Section 2  is devoted to characterizing phase retrieval dual frames and full spark dual frames. In this section, for some classes of frames,  we show that the family of all ( full spark) phase retrieval dual frames  is open and    dense in the set of all dual frames. Moreover, we present several examples in this regard. In Section 3,
 we survey weak phase retrieval problem  and investigate some equivalent conditions for identifying  phase  and weak phase retrieval frames. We also, obtain
a sufficient conditions on a family of weak phase retrieval frames to constitute a frame and its canonical dual yields weak phase retrieval, as well.
\section{Phase  Retrieval Dual Frames}
In this section, we address the problem that, given a phase retrieval frame $\Phi$, characterize phase   retrieval dual frames of $\Phi$ in finite dimensional Hilbert spaces. For some classes of frames we show that  phase   retrieval dual frames of a given frame are dense in the set of all dual frames.  For a frame $\Phi$ we  denote  the set of all its dual frames of $\Phi$ by $D_{\Phi}$ and the subset of phase retrieval dual frames is denoted  by $PD_{\Phi}$.
We first investigate the relationship between  phase retrieval  duals of equivalent frames, which is a very useful tool for the main results of this section.
\begin{lem}\label{equlem}
Suppose that $\Phi=\{\phi_{i}\}_{i\in I_{m}}$ is a frame for $\mathcal{H}_{n}$  and $\mathcal{T}$ is an invertible operator on $\mathcal{H}_{n}$.  Then,
\begin{itemize}
\item[(i)]
$D_{\mathcal{T}\Phi} = (\mathcal{T}^{*})^{-1}  D_{\Phi}$.
\item[(ii)] $PD_{\mathcal{T}\Phi} = (\mathcal{T}^{*})^{-1} PD_{\Phi}$.
\end{itemize}
\end{lem}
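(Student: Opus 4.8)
The plan is to prove (i) by direct verification of the reconstruction identity in both directions, and then to deduce (ii) almost immediately from (i) together with Theorem \ref{1.3}.

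For the inclusion $(\mathcal{T}^{*})^{-1}D_{\Phi} \subseteq D_{\mathcal{T}\Phi}$ in part (i), I would start from an arbitrary dual $\Psi=\{\psi_{i}\}_{i\in I_{m}}\in D_{\Phi}$, so that $f=\sum_{i}\langle f,\psi_{i}\rangle\phi_{i}$ for all $f\in\mathcal{H}_{n}$. Since $(\mathcal{T}^{*})^{-1}$ is invertible, $(\mathcal{T}^{*})^{-1}\Psi$ is again a frame, and the key computation uses the adjoint identity $\langle f,(\mathcal{T}^{*})^{-1}\psi_{i}\rangle=\langle\mathcal{T}^{-1}f,\psi_{i}\rangle$, valid because $\bigl((\mathcal{T}^{*})^{-1}\bigr)^{*}=\mathcal{T}^{-1}$. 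Substituting this and pulling the operator $\mathcal{T}$ out of the finite sum gives $\sum_{i}\langle f,(\mathcal{T}^{*})^{-1}\psi_{i}\rangle\,\mathcal{T}\phi_{i}=\mathcal{T}\bigl(\sum_{i}\langle\mathcal{T}^{-1}f,\psi_{i}\rangle\phi_{i}\bigr)=\mathcal{T}\mathcal{T}^{-1}f=f$, so $(\mathcal{T}^{*})^{-1}\Psi\in D_{\mathcal{T}\Phi}$.

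For the reverse inclusion, I would take an arbitrary $\Xi=\{\xi_{i}\}_{i\in I_{m}}\in D_{\mathcal{T}\Phi}$ and show $\mathcal{T}^{*}\Xi\in D_{\Phi}$. Applying $\mathcal{T}^{-1}$ to the reconstruction identity $g=\sum_{i}\langle g,\xi_{i}\rangle\mathcal{T}\phi_{i}$ yields $\mathcal{T}^{-1}g=\sum_{i}\langle g,\xi_{i}\rangle\phi_{i}$; evaluating at $g=\mathcal{T}f$ and using $\langle\mathcal{T}f,\xi_{i}\rangle=\langle f,\mathcal{T}^{*}\xi_{i}\rangle$ gives $f=\sum_{i}\langle f,\mathcal{T}^{*}\xi_{i}\rangle\phi_{i}$, i.e.\ $\mathcal{T}^{*}\Xi\in D_{\Phi}$ and hence $\Xi\in(\mathcal{T}^{*})^{-1}D_{\Phi}$. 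This closes part (i).

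Part (ii) then follows by intersecting with the phase retrieval condition. By part (i), the map $\Psi\mapsto(\mathcal{T}^{*})^{-1}\Psi$ is a bijection from $D_{\Phi}$ onto $D_{\mathcal{T}\Phi}$, so it suffices to check that it preserves the phase retrieval property. Since $(\mathcal{T}^{*})^{-1}$ is invertible, Theorem \ref{1.3} applied with $U=(\mathcal{T}^{*})^{-1}$ shows that $\Psi$ does phase retrieval if and only if $(\mathcal{T}^{*})^{-1}\Psi$ does. Combining this equivalence with the bijection of part (i) yields $\Psi\in PD_{\Phi} \Leftrightarrow (\mathcal{T}^{*})^{-1}\Psi\in PD_{\mathcal{T}\Phi}$, which is exactly $PD_{\mathcal{T}\Phi}=(\mathcal{T}^{*})^{-1}PD_{\Phi}$. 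The computations are routine; the only point requiring care is the bookkeeping of adjoints and inverses—specifically getting $\bigl((\mathcal{T}^{*})^{-1}\bigr)^{*}=\mathcal{T}^{-1}$ right and choosing the substitution $g=\mathcal{T}f$ so that the reconstruction identity for $\mathcal{T}\Phi$ transfers cleanly back to $\Phi$. I expect no genuine obstacle beyond this.
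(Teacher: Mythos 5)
Your proof is correct and follows the same route as the paper: the paper disposes of (i) with the phrase ``a simple computation'' (after noting $S_{\mathcal{T}\Phi}=\mathcal{T}S_{\Phi}\mathcal{T}^{*}$), and your two-inclusion verification with the adjoint identity $\bigl((\mathcal{T}^{*})^{-1}\bigr)^{*}=\mathcal{T}^{-1}$ is exactly that computation carried out in full; part (ii) is obtained in both cases as an immediate consequence of (i) together with Theorem \ref{1.3}.
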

\begin{proof}
It is known that  $\mathcal{T}\Phi$ is a frame with $S_{\mathcal{T}\Phi}= \mathcal{T}S_{\Phi}\mathcal{T}^{*}$ \cite{Chr08},  and so  a simple computation assures that $D_{\mathcal{T}\Phi} = \{(\mathcal{T}^{*})^{-1}G : G\in D_{\Phi}\}=  (\mathcal{T}^{*})^{-1}  D_{\Phi}$.
Moreover, $(ii)$ is given as  an immediate result of $(i)$ along with Theorem \ref{1.3}.
\end{proof}
\begin{rem}\label{2.212} If $\Phi=\{\phi_{i}\}_{i\in I_{m}}$ is a frame for $\mathcal{H}_{n}$ so that  $E(\Phi)=k$, which  $E(\Phi)$ denotes the excess of $\Phi$, then there exist  $i_{1}, ..., i_{k}$ so that $\Phi \setminus \{\phi_{i_{j}}\}_{j=1}^{k}$ constitutes a Riesz basis for $\mathcal{H}_{n}$. Therefore, applying the above notations and without loss of generality,  we may consider
$
\Phi =\{\phi_{i}\}_{i\in I_{n}}\cup \{\phi_{i}\}_{i=n+1}^{m}$, where $\{\phi_{i}\}_{i\in I_{n}}$ is a  Riesz basis  for $\mathcal{H}_{n}$. In this point of view, $\Phi$ is indeed equivalent to a form as $\{\delta_{i}\}_{i\in I_{n}}\cup \{\tilde{\phi}_{i}\}_{i=n+1}^{m}$ with redundant elements $\{\tilde{\phi}_{i}\}_{i=n+1}^{m}$.
\end{rem}
In the next theorem we identify the set of dual frames of a frame $\{\phi_{i}\}_{i\in I_{2n-1}}$ in $n$-dimensional real space and show that $PD_{\phi}$  is an open and  dense subset  in $D_{\phi}$.
\begin{thm}\label{densemetric}
Let $\phi=\{\phi_{i}\}_{i\in I_{2n-1}}$ be a  phase retrieval frame in $\mathbb{R}^{n}$. Then   $PD_{\phi}$  is an open and  dense subset  in $D_{\phi}$.
\end{thm}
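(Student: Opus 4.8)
The plan is to exploit the fact that, for a family of exactly $2n-1$ vectors in $\mathbb{R}^{n}$, the Proposition stated above says that doing phase retrieval is equivalent to being full spark. Since every dual of $\phi$ again consists of $2n-1$ vectors, this identifies $PD_{\phi}$ with the full spark members of $D_{\phi}$, so the whole problem reduces to the genericity of the full spark condition inside the affine family $D_{\phi}$.

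First I would describe $D_{\phi}$ concretely. Writing $T_{\phi}$ for the $n\times(2n-1)$ synthesis matrix of $\phi$ and $T_{\psi}$ for that of a candidate dual $\psi$, the duality relation $\sum_{i}\langle f,\psi_{i}\rangle\phi_{i}=f$ is exactly $T_{\phi}T_{\psi}^{*}=I_{n}$, a system of linear equations in the entries of $T_{\psi}$. Hence $D_{\phi}$ is an affine subspace of $(\mathbb{R}^{n})^{2n-1}$; since $T_{\phi}$ has rank $n$, the $n^{2}$ constraints are independent and $D_{\phi}$ has dimension $n(2n-1)-n^{2}=n(n-1)$, so I may parametrize it as $\psi=\psi_{0}+W$ with $\psi_{0}$ the canonical dual and $W$ ranging over the linear space $\{W:T_{\phi}W^{*}=0\}$. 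The full spark condition on $\psi$ is that all $\binom{2n-1}{n}$ maximal minors $m_{\sigma}(\psi)=\det[\psi_{i}]_{i\in\sigma}$ be nonzero, and each $m_{\sigma}$ restricts to a polynomial in the coordinates of $W$ on $D_{\phi}$.

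The crucial point, and the one I would establish with care, is that there is a single dual at which all these minors are simultaneously nonzero. The canonical dual $\psi_{0}=\{S_{\phi}^{-1}\phi_{i}\}_{i}$ equals $S_{\phi}^{-1}\phi$, i.e.\ the image of $\phi$ under the invertible operator $S_{\phi}^{-1}$; by Theorem \ref{1.3} it therefore does phase retrieval, and since it has $2n-1$ vectors it is full spark by the Proposition. Consequently $m_{\sigma}(\psi_{0})\neq0$ for every $\sigma$, so none of the polynomials $m_{\sigma}$ vanishes identically on $D_{\phi}$.

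Finally I would assemble the genericity argument. Each set $\{m_{\sigma}\neq0\}$ is open in $D_{\phi}$, being the preimage of an open set under a continuous map, and since $m_{\sigma}\not\equiv0$ its complement $\{m_{\sigma}=0\}\cap D_{\phi}$ is the zero set of a nonzero polynomial on $D_{\phi}\cong\mathbb{R}^{n(n-1)}$, hence nowhere dense and of measure zero. Therefore $PD_{\phi}=\bigcap_{\sigma}\{m_{\sigma}\neq0\}$ is a finite intersection of open dense subsets of $D_{\phi}$, which is itself open and dense (indeed of full measure). I expect the main obstacle to be not the topology but securing this witness with all minors nonzero; the observation that the canonical dual is equivalent to $\phi$ and hence inherits full spark is exactly what makes the generic argument go through. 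If preferred, one may first reduce to the canonical form of Remark \ref{2.212} via Lemma \ref{equlem}, since $(\mathcal{T}^{*})^{-1}$ is a linear homeomorphism carrying $D_{\phi}$ and $PD_{\phi}$ onto the corresponding sets for $\mathcal{T}\phi$, but this normalization is not essential to the proof.
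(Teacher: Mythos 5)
Your proof is correct and follows the same underlying strategy as the paper's: identify $D_{\phi}$ with an affine copy of $\mathbb{R}^{n(n-1)}$, use the fact that for $2n-1$ vectors in $\mathbb{R}^{n}$ phase retrieval is equivalent to being full spark, and exhibit $PD_{\phi}$ as the complement of the zero set of the maximal-minor polynomials, hence open and dense. The differences are in execution. The paper first normalizes $\phi$ to the form $\{\delta_{i}\}_{i\in I_{n}}\cup\{\phi_{i}\}_{i=n+1}^{2n-1}$ via Lemma \ref{equlem} and Remark \ref{2.212}, writes out the linear relations $u_{i}=-\sum_{j}a_{i}^{j}u_{j}$ explicitly to build the bijection $\xi:D_{\Phi}\to\mathbb{R}^{n(n-1)}$, multiplies all the determinants into a single polynomial $P_{\phi}$, and proves density by an explicit approximating-sequence estimate; you instead work coordinate-free with the affine constraint $T_{\phi}T_{\psi}^{*}=I_{n}$ (whose solution set has dimension $n(n-1)$ since $T_{\phi}$ is surjective) and invoke the standard fact that the zero set of a nonzero polynomial is closed and nowhere dense. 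The one substantive point where your write-up is more complete is the verification that the relevant polynomials do not vanish identically on $D_{\phi}$: you supply an explicit witness --- the canonical dual is the image of $\phi$ under the invertible operator $S_{\phi}^{-1}$, hence does phase retrieval by Theorem \ref{1.3}, hence is full spark, so every minor is nonzero there --- whereas the paper's density step (choosing a sequence outside the roots of $P_{\phi}$ converging to a given point) tacitly assumes $P_{\phi}\not\equiv 0$ without exhibiting such a point. Both arguments are otherwise sound and yield the same conclusion, including the stronger observation that $PD_{\phi}$ has full measure in $D_{\phi}$.
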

\begin{proof}
First let $\{\phi_{i}\}_{i\in I_{n}}$ be the standard orthonormal  basis of $\mathbb{R}^{n}$ and  $\phi_{j}=\sum_{i\in I_{n}} a_{i}^{j} \phi_{i}$, $j= n+1, ... ,2n-1$, for some  non-zero coefficients $\{a_{i}^{j}\}_{i\in I_{n}}$.
 Due to the fact that
\begin{eqnarray*}
D_{\Phi} = \left\{\{S_{\Phi}^{-1}\phi_{i}+u_{i}\}_{i\in I_{2n-1}} : \sum_{i\in I_{2n-1}}\langle f,\phi_{i}\rangle u_{i}=0, \textit{for all } f\in \mathbb{R}^{n}\right\}
\end{eqnarray*}
by putting $f=\phi_{j}$, $j=1, ..., 2n-1$ we observe that $(2n-1) \times n$ matrix $[u_{1} \vert  u_{2} \vert ...  \vert u_{2n-1}]^{T}$ is  in the null space of $G_{\Phi}^{T}$, the transposed Gram matrix of $\Phi$. The fact that,   $dim (null G_{\Phi} )=n-1$;
assures that just $n-1$ vectors of $\{u_{i}\}_{i\in I_{2n-1}}$ can  be  independent. More precisely,
\begin{eqnarray*}
 u_{1}+\sum_{i=n+1}^{2n-1}\langle \phi_{1},\phi_{i}\rangle u_{i}=0,\quad  ... \quad u_{n}+\sum_{i=n+1}^{2n-1}\langle \phi_{n},\phi_{i}\rangle u_{i}=0.
\end{eqnarray*}
 So, by choosing $\{u_{j}\}_{j=n+1}^{2n-1}$ we get
\begin{equation}\label{constuctu}
u_{i}=  - \sum_{j=n+1}^{2n-1}a_{i}^{j} u_{j}, \quad (i\in I_{n}),
\end{equation}
in which $a_{i}^{j}=\langle \phi_{i},\phi_{j}\rangle$, for $n+1\leq j\leq 2n-1$.
 Hence,  every dual frame   is uniquely constructed by  the following vector
\begin{eqnarray}\label{uniqueg}
\mathcal{U}  = \left[u_{n+1},..., u_{2n-1}\right]
%
\in \mathbb{R}^{n(n-1)}.
\end{eqnarray}
Considering $D_{\Phi}$ as a metric space  by  $d(G, H)=\sum_{i\in I_{2n-1}}\Vert  g_{i}-h_{i} \Vert$, we
define the mapping $\xi:D_{\Phi}\rightarrow  \mathbb{R}^{n(n-1)}$  by $\xi(G) = \mathcal{U}_{g}$, where   $\mathcal{U}_{g}\in \mathbb{R}^{n(n-1)}$ is  the unique sequence associated to $G\in D_{\Phi}$ as in $(\ref{uniqueg})$.
Then, clearly
$\xi$ is well-defined  and injective. Also, take $\mathcal{A} \in \mathbb{R}^{n(n-1)}$, then put $u_{n+1}=\{\mathcal{A}_{1}, ... , \mathcal{A}_{n}\}$,..., $u_{2n-1}=\{\mathcal{A}_{n^{2}-2n+1}, ... , \mathcal{A}_{n(n-1)}\}$ and construct $u_{i}$, $i\in I_{n}$ by $(\ref{constuctu})$. Thus we get  $\{S_{\Phi}^{-1}\phi_{i}+u_{i}\}_{i\in I_{2n-1}} \in D_{\Phi}$, i.e., $\xi$ is a bijective map.
Moreover,  $\xi$ is a  Lipschitz function.
Let $G=\{S_{\Phi}^{-1}\phi_{i}+u_{i}\}_{i\in I_{2n-1}}$ and $H=\{S_{\Phi}^{-1}\phi_{i}+v_{i}\}_{i\in I_{2n-1}}$ be dual frames of $\Phi$ with the associated $\mathcal{U}_{g}$ and $\mathcal{U}_{h}$ obtained  as in $(\ref{uniqueg})$, respectively.  Then,
\begin{eqnarray*}
\Vert\xi(G)-\xi(H)\Vert = \Vert\mathcal{U}_{g}-\mathcal{U}_{h}\Vert = \sum_{i=n+1}^{2n-1} \Vert u_{i}-v_{i}\Vert
\leq \sum_{i\in I_{2n-1}} \Vert u_{i}-v_{i}\Vert =d(G, H).
 \end{eqnarray*}
 What is more, $\xi$ is a bi-Lipschitz function, but we will not need this fact.
Now, we note that   the only cases in which a dual frame $\{g_{i}\}_{i\in I_{2n-1}}$ fails to do phase retrieval is associated to
$ det [g_{i_{1}}\vert \quad ... \quad  \vert  g_{i_{n}}]  =0$
for some index set $\{i_{j}\}_{j=1}^{n}\subset I_{2n-1}$, by Theorem \ref{thm1}.
Multiplying these   equations yields an $n(n-1)$-variable polynomial in terms of $u_{n+1}$, ..., $u_{2n-1}$,   denoted by  $P_{\mathcal{\phi}}$. Therefore,
 \begin{eqnarray}\label{open}
  PD_{\Phi}=\xi^{-1}\{\mathcal{U}\in \mathbb{R}^{n(n-1)}: P_{\mathcal{\phi}}(\mathcal{U})\neq 0\},
 \end{eqnarray}
 that means $PD_{\Phi}$ is open in $D_{\Phi}$.
To show $PD_{\Phi}$ is, moreover, dense in $D_{\phi}$, let   $\epsilon>0$ and $G=\{g_{i}\}_{i\in I_{2n-1}} = \{S_{\Phi}^{-1}\Phi_{i}+u_{i}\}_{i\in I_{2n-1}}$ be a dual frame in $D_{\Phi}\setminus PD_{\Phi}$. Then $G$ is dependent just  in vectors $\{u_{i}\}_{i=n+1}^{2n-1}$ such that $P_{\mathcal{\phi}}(\{u_{i}\}_{i=n+1}^{2n-1})=0$. Take a sequence $ \{\{v_{i}^{k}\}_{i=n+1}^{2n-1}\}_{k\in \mathbb{N}}$  out of the roots of the polynomial $P_{\mathcal{\phi}}$ in $\mathbb{R}^{n(n-1)}$ so that
 $\lim_{k\rightarrow \infty} \{v_{i}^{k}\}_{i=n+1}^{2n-1}=\{u_{i}\}_{i=n+1}^{2n-1}$. Also, put $v_{i}^{k}=-\sum_{j=n+1}^{2n-1}  a_{i}^{j} v_{j}^{k}$, for all $1\leq i\leq n$ and $k\in \mathbb{N}$.
Then, $\{v_{i}^{k}\}_{i\in  I_{2n-1}}$ satisfies $(\ref{constuctu})$ and so
the sequence $ \{h_{i}^{k}\}_{k\in \mathbb{N}, i\in I_{2n-1}}$  associated to $ \{\{v_{i}^{k}\}_{i=n+1}^{2n-1}\}_{k\in \mathbb{N}}$ defined by
$h_{i}^{k}= S_{\Phi}^{-1}\Phi_{i}+v_{i}^{k}$, $i\in I_{2n-1}$
constitutes a dual frame of $\Phi$, for all $k\in \mathbb{N}$.
Furthermore, there exists $k_{0}\in \mathbb{N}$ so that for $k \geq k_{0}$
\begin{eqnarray*}
d\left(\{v_{i}^{k}\}_{i=n+1}^{2n-1}\},\{u_{i}\}_{i=n+1}^{2n-1}\}\right) = \sum_{i=n+1}^{2n-1}\Vert v_{i}^{k}-u_{i}\Vert< \epsilon.
\end{eqnarray*}
Hence,  we can write
\begin{eqnarray*}
d(\{h_{i}^{k}\}_{k\in \mathbb{N}, i\in I_{2n-1}},\{g_{i}\}_{i\in I_{2n-1}}) &=& \sum_{i\in I_{2n-1}} \Vert v_{i}^{k}-u_{i}\Vert\\
&=& \sum_{i\in I_{n}} \Vert v_{i}^{k}-u_{i}\Vert +\sum_{i=n+1}^{2n-1} \Vert v_{i}^{k}-u_{i}\Vert\\
&=&  \sum_{i\in I_{n}} \Vert -\sum_{j=n+1}^{2n-1}a_{i}^{j}(v_{j}^{k}-u_{j})\Vert +\sum_{i=n+1}^{2n-1} \Vert v_{i}^{k}-u_{i}\Vert\\
&\leq&  \sum_{i\in I_{n}}\sum_{j=n+1}^{2n-1}\vert a_{i}^{j}\vert  \Vert  v_{j}^{k}-u_{j}\Vert +\sum_{i=n+1}^{2n-1} \Vert v_{i}^{k}-u_{i}\Vert\\
&\leq&(\alpha+1)  \sum_{j=n+1}^{2n-1} \Vert  v_{j}^{k}-u_{j}\Vert
< \epsilon(\alpha+1),
\end{eqnarray*}
for $k \geq k_{0}$, i.e., $PD_{\Phi}$ is dense in $D_{\Phi}$.

Now, applying  Lemma \ref{equlem} and Remark \ref{2.212} gives   the result  in general case.
In fact,   every frame $\Psi = \{\psi_{i}\}_{i\in I_{2n-1}}$ is equivalent to a frame in the form of $\Phi=
\{\delta_{i}\}_{i\in I_{n}}\cup \{\phi _{i}\}_{i=n+1}^{2n-1}$, as we discussed in Remark \ref{2.212}. So, there exists an invertible operator $\mathcal{T}$ on $\mathbb{R}^{n}$ so that $\Psi=\mathcal{T}\Phi$, consequently by Lemma \ref{equlem} we get
\begin{eqnarray*}
\overline{PD_{\Psi}}= \overline{PD_{\mathcal{T}\Phi}} = \overline{(\mathcal{T}^{*})^{-1} PD_{\Phi}}\supseteq  (\mathcal{T}^{*})^{-1}\overline{  PD_{\Phi}}=(\mathcal{T}^{*})^{-1}D_{\Phi}=D_{\Psi},
\end{eqnarray*}
i.e., $\overline{PD_{\Psi}}=D_{\Psi}$. And using $(\ref{open})$
\begin{eqnarray*}
PD_{\Psi}= (\mathcal{T}^{*})^{-1} PD_{\Phi} =(\mathcal{T}^{*})^{-1}\xi^{-1}\{\mathcal{U}\in \mathbb{R}^{n(n-1)}: P_{\mathcal{\phi}}(\mathcal{U})\neq 0\},
\end{eqnarray*}
i.e., $PD_{\Psi}$ is open in $D_{\Psi}$.
 This completes the proof.
\end{proof}
An analogous approach to the proof of  Theorem \ref{densemetric} deduces  that  for  any full spark frame  $\Phi$ in $\mathbb{R}^{n}$ with $E(\Phi)=1$, the set of all full spark dual frames is embedded into a generic set in $\mathbb{R}^{n}$.
\begin{cor}
Let $\Phi=\{\phi_{i}\}_{i=1}^{n+1}$ be a full spark frame for $\mathbb{R}^{n}$. Then   the set of all full spark dual frames of $\Phi $   is an open and dense subset of   $D_{\Phi}$ and is embedded into a generic set in $\mathbb{R}^{n}$.
\end{cor}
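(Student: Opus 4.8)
The plan is to mirror the proof of Theorem \ref{densemetric}, but now tracking the full spark condition in place of the complement property and exploiting that the excess is $E(\Phi)=1$. First I would invoke Remark \ref{2.212} and Lemma \ref{equlem} to reduce to the canonical representative $\Phi=\{\delta_i\}_{i\in I_n}\cup\{\phi_{n+1}\}$ with $\phi_{n+1}=\sum_{i\in I_n}a_i\delta_i$; here full spark of the primal frame is equivalent to $a_i\neq 0$ for every $i\in I_n$, since deleting $\delta_k$ leaves a linearly independent set exactly when $a_k\neq 0$. Because $\dim(\mathrm{null}\,G_\Phi)=m-n=1$, the relation $\sum_{i\in I_{n+1}}\langle f,\phi_i\rangle u_i=0$ collapses to $u_k=-a_k u_{n+1}$ for $k\in I_n$, so a dual $\{S_\Phi^{-1}\phi_i+u_i\}_{i\in I_{n+1}}$ is uniquely determined by the single free vector $u_{n+1}\in\mathbb{R}^n$. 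Exactly as in Theorem \ref{densemetric}, this produces a bijective bi-Lipschitz map $\xi:D_\Phi\to\mathbb{R}^n$, $\xi(G)=u_{n+1}$, which is the embedding into $\mathbb{R}^n$ asserted in the statement.

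Next I would encode full spark as the nonvanishing of a polynomial. Writing $g_i=S_\Phi^{-1}\phi_i+u_i$, every entry of each $g_i$ is an affine function of the coordinates of $u_{n+1}$, so for every $n$-subset $\{i_1,\dots,i_n\}\subset I_{n+1}$ the quantity $\det[g_{i_1}|\cdots|g_{i_n}]$ is a polynomial in $u_{n+1}$. A dual $G$ is full spark precisely when all $\binom{n+1}{n}=n+1$ of these determinants are nonzero; multiplying them gives a single polynomial $P_\Phi$ on $\mathbb{R}^n$, and the set of full spark duals equals $\xi^{-1}\{u_{n+1}\in\mathbb{R}^n:P_\Phi(u_{n+1})\neq 0\}$. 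Since $\xi$ is continuous, this set is open in $D_\Phi$.

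The crux is density, which reduces to showing $P_\Phi\not\equiv 0$, equivalently that at least one full spark dual exists. The natural candidate is the canonical dual, obtained at $u_{n+1}=0$: it equals $S_\Phi^{-1}\Phi$, and because $S_\Phi^{-1}$ is invertible it preserves linear independence of every $n$-subset, so the canonical dual of a full spark frame is again full spark. Hence $P_\Phi(0)\neq 0$, so $P_\Phi$ is a nonzero polynomial, its zero set is nowhere dense, and $\{P_\Phi\neq 0\}$ is open, dense, and generic in $\mathbb{R}^n$; transporting back through the homeomorphism $\xi$ gives density of the full spark duals in $D_\Phi$ and realizes them, via $\xi$, as a generic subset of $\mathbb{R}^n$. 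The general case then follows from Lemma \ref{equlem}: if $\Psi=\mathcal{T}\Phi$ with $\mathcal{T}$ invertible, then $\mathcal{T}$ carries full spark frames to full spark frames and $D_\Psi=(\mathcal{T}^*)^{-1}D_\Phi$, so openness, density, and the generic embedding all transfer.

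I expect the main obstacle to be the density step, specifically certifying $P_\Phi\not\equiv 0$. The canonical-dual argument resolves this cleanly, but one must verify that invertibility of $S_\Phi^{-1}$ preserves full spark for \emph{all} $n$-subsets, including those containing the index $n+1$, and not merely for the distinguished subset $\{1,\dots,n\}$. The remaining work — the bi-Lipschitz estimate for $\xi$ and the transfer under $\mathcal{T}$ — is routine and parallels Theorem \ref{densemetric}.
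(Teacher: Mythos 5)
Your proposal is correct and follows essentially the same route as the paper: reduce via Remark \ref{2.212} and Lemma \ref{equlem} to $\{\delta_i\}_{i\in I_n}\cup\{\sum_i a_i\delta_i\}$, parametrize $D_\Phi$ by the single free vector $u_{n+1}\in\mathbb{R}^n$, identify the non--full-spark duals with the zero set of the product polynomial $P_\Phi$, and transfer back by $(\mathcal{T}^*)^{-1}$. The one place you go beyond the paper is in explicitly certifying $P_\Phi\not\equiv 0$ by checking that the canonical dual (the point $u_{n+1}=0$) is itself full spark because $S_\Phi^{-1}$ preserves linear independence of every $n$-subset; the paper leaves this nonvanishing implicit, so your version is, if anything, slightly more complete.
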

\begin{proof}
Applying  Remark \ref{2.212},  a full spark frame $\Phi=\{\phi_{i}\}_{i=1}^{n+1}$ of $\mathbb{R}^{n}$ is equivalent to $\tilde{\Phi}:=\{\delta_{i}\}_{i=1}^{n}\cup \{\sum_{i=1}^{n}\alpha_{i}\delta_{i}\}$ for non-zero scalars $\alpha_{i}$, $i\in I_{n}$, and in this case  the frame operator is as follows:
\begin{equation*}
S_{\tilde{\Phi}} = \left[
 \begin{array}{ccc}

1+\alpha_{1}^{2} \quad \alpha_{1}\alpha_{2} \quad ... \quad  \alpha_{1}\alpha_{n}\\
\\
\alpha_{1}\alpha_{2} \quad 1+\alpha_{2}^{2} \quad ... \quad \alpha_{2}\alpha_{n}\\
.\\
.\\
\alpha_{1}\alpha_{n}\quad \alpha_{2}\alpha_{n} \quad ... \quad 1+\alpha_{n}^{2} \\
\end{array} \right]
\end{equation*}
Also,
 every dual frame of $\tilde{\Phi}$  is in the form of $\{S_{\tilde{\Phi}}^{-1}\tilde{\phi}_{i}+u_{i}\}_{i=1}^{n+1}$  so that  $T_{u}T^{*}_{\tilde{\Phi}}=0$. By taking $u_{n+1}=[x_{1}, ... , x_{n}]^{T}$, we get  $u_{i}=-\alpha_{i}u_{n+1}$ for all $i\in I_{n}$.
 Hence, $D_{\tilde{\Phi}}$ is the set of  all  $\{g_{i}\}_{i=1}^{n+1}$ so that
\begin{eqnarray*}
g_{i}^{k} =\begin{cases}
\begin{array}{ccc}
 -\alpha\alpha_{i}\alpha_{k}-\alpha_{i}x_{k}& \;
{k\neq i}, \\
\alpha(1+\sum_{j\neq i}\alpha_{j}^{2})-\alpha_{i}x_{k}& \; {k=i} , \\
\end{array}
\end{cases}
\end{eqnarray*}
where $\alpha=\dfrac{1}{det S_{\tilde{\Phi}}}=\dfrac{1}{1+\sum_{i=1}^{n}\alpha_{i}^{2}}$, $g_{i}^{k}$ denotes $k^{th}$ coordinate of $g_{i}$,  $i\in I_{n}$, and
\begin{equation*}
g_{n+1}=\{\alpha\alpha_{i}+x_{i}\}_{i=1}^{n}.
\end{equation*}
This shows  that, every dual frame is associated to $n$-variable $\{x_{i}\}_{i\in I_{n}}$ and a dual frame is full spark frame except the cases that the sequence $u_{n+1}=\{x_{i}\}_{i\in I_{n}}$  belongs to the roots of the   polynomial constructed by
$ det [g_{i_{1}}\vert \quad ... \quad  \vert  g_{i_{n}}]=0$,
for some  index set $\{i_{j}\}_{j=1}^{n}\subset I_{n+1}$.
Multiplying these $n+1$ polynomials yield an $n$-variable polynomial  $P_{\tilde{\Phi}}(x_{1}, ...,x_{n})$. So,   every full spark dual frame  of $\tilde{\Phi}$ is  obtained   by a generic choice of $u_{n+1}$ out of the roots of the polynomial $P_{\tilde{\Phi}}$.  Hence, by a similar approach to the proof of Theorem \ref{densemetric}, and considering the bijection map $\xi:D_{\tilde{\Phi}}\rightarrow  \mathbb{R}^{n}$ defined as $\xi(G) = u_{n+1}$, where $G=\{S_{\tilde{\Phi}}^{-1}\tilde{\phi}_{i}+u_{i}\}_{i=1}^{n+1}$,
  the complement of all full spark dual frames  of $\Phi $  in $D_{\Phi}$  is equivalent  to  the set of roots of $P_{\tilde{\Phi}}$, and so, the set of full spark dual frames  of $\Phi$ is embedded into a generic set in $\mathbb{R}^{n}$ by $\xi$.
In general case, if $\Phi=\mathcal{T}\tilde{\Phi}$, for an invertible operator $\mathcal{T}$ on $\mathbb{R}^{n}$  by using   Lemma \ref{equlem}, the set of full spark dual frames  of $\Phi$, $FD_{\Phi}$,  is the same $(\mathcal{T}^{*})^{-1}FD_{\tilde{\Phi}}$,
 and this completes the  proof.
\end{proof}
\subsection{Examples}
\begin{ex}
 Suppose that $\phi=\{\phi_{i}\}_{i=1}^{3}$ is a full spark frame for $\mathbb{R}^{2}$.
Since $E(\mathcal{\phi})=1$  and equivalent frames do the same phase retrieval we assume that
   $\phi$ is equivalent to  $\{\delta_{1},\delta_{2},\alpha_{1}\delta_{1}+\alpha_{2}\delta_{2}\}$, in  which both $\alpha_{1}$ and $\alpha_{2}$ are non-zero. In this case,
\begin{equation*}
D_{\phi} = \left\{ \left[
 \begin{array}{ccc}

\alpha(1+\alpha_{2}^{2})-\alpha_{1}x\\
\\
-\alpha\alpha_{1}\alpha_{2}-\alpha_{1}y \\
\end{array} \right], \left[
 \begin{array}{ccc}

-\alpha\alpha_{1}\alpha_{2}-\alpha_{2}x\\\
\\
\alpha(1+\alpha_{1}^{2})-\alpha_{2}y \\
\end{array} \right], \left[
 \begin{array}{ccc}

\alpha\alpha_{1}+x\\
\\
\alpha\alpha_{2}+y
\end{array} \right] ; x,y\in \mathbb{R}\right\}
\end{equation*}
 where $\alpha=\dfrac{1}{1+\alpha_{1}^{2}+\alpha_{2}^{2}}$.
It is worthy of note that, all dual frames in this case are full spark and so  phase retrieval except dual frames obtained by $(x,y)\in \mathbb{R}^{2}$ on three  distinct lines with slopes of $0$, $\infty$ and $-\alpha_{1}/\alpha_{2}$ such as $x=-\alpha\alpha_{1}$, $y=-\alpha\alpha_{2}$ and $\alpha_{1}x+\alpha_{2}y=\alpha$.
 Considering
\begin{equation*}
(x,y)\in \mathbb{R}^{2}\setminus \{(x,y) : (x+\alpha\alpha_{1})(y+\alpha\alpha_{2})(\alpha_{1}x+\alpha_{2}y-\alpha)=0\}
\end{equation*}
 we get a generic choice of $\{u_{i}\}_{i\in I_{4}}$ in $\mathbb{R}^{2}$. The fact that all dual frames of $\phi$  are translations of this family by $\{S_{\phi}^{-1}\phi_{i}\}_{i\in I_{4}}$ shows that full spark dual frames are dense in $D_{\Phi}$ and full measure. As a special case,
considering $\alpha_{1}=\alpha_{2}=1$ we get the phase retrieval frame $\phi=\{\delta_{1},\delta_{2},\delta_{1}+\delta_{2}\}$
%
%
%
and we can see that dual frames of $\Phi$ do  phase retrieval for all $(x,y)\in \mathbb{R}^{2}$ except on the lines $x=\dfrac{-1}{3}$, $y=\dfrac{-1}{3}$ and $y=\dfrac{1}{3}-x$.
Put  $x=0$, $y=2/3$ we get a phase retrieval dual $\Psi$
 and
$x=1, y=\dfrac{-2}{3}$, satisfying in $y=\dfrac{1}{3}-x$, gives a non-phase retrieval dual frame $G$. See Figure 1.
\begin{center}
\includegraphics[scale=0.30]{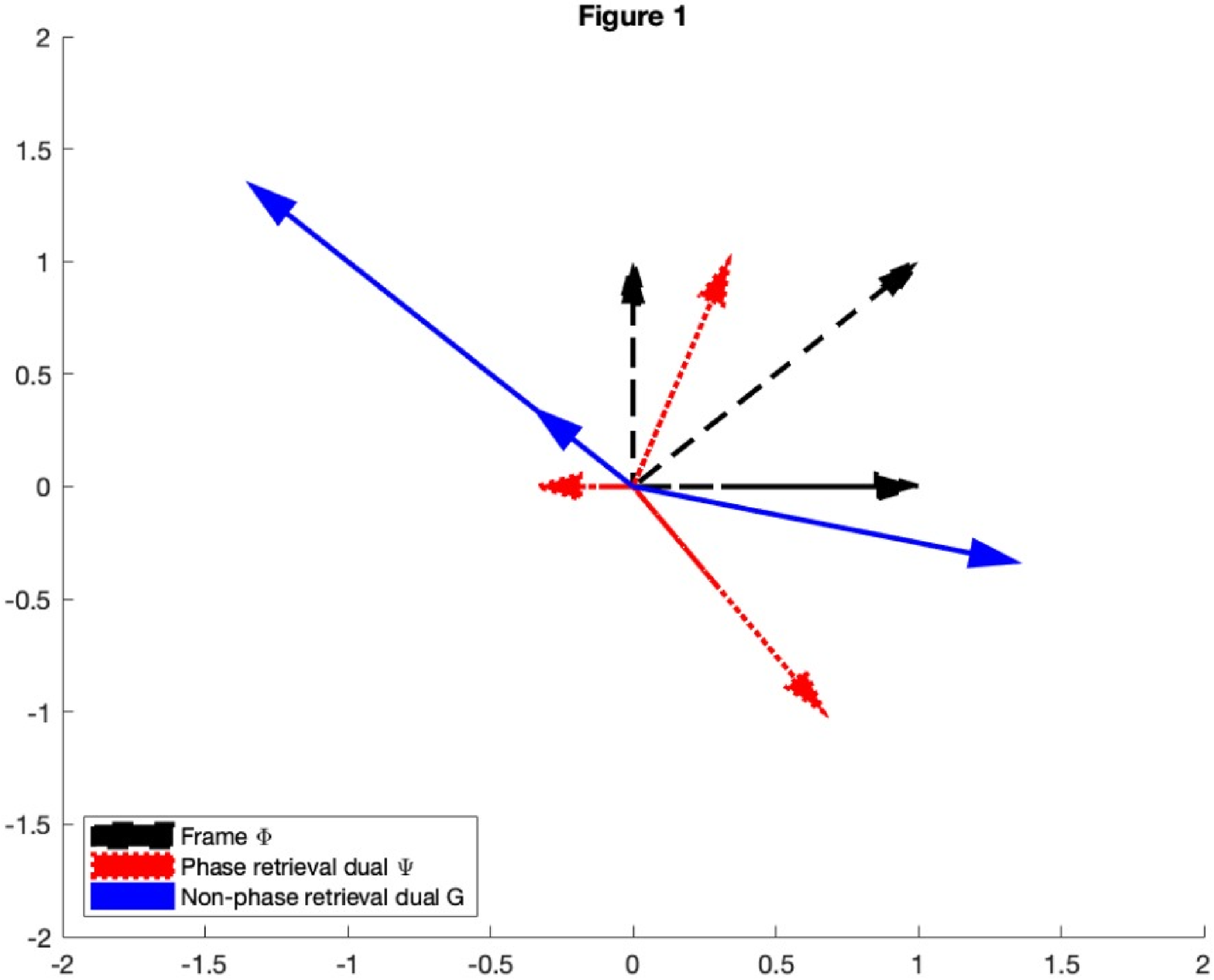}
\end{center}
\end{ex}
\begin{ex} Let $\phi=\{\phi_{i}\}_{i=1}^{4}$ be a full spark frame for $\mathbb{R}^{3}$.
In this case $\phi$ is equivalent to the frame
\begin{equation*}
\phi:= \{\delta_{1},\delta_{2}, \delta_{3}, \alpha_{1}\delta_{1}+\alpha_{2}\delta_{2}+\alpha_{3}\delta_{3}\quad   \alpha_{1},\alpha_{2},\alpha_{3}\neq 0\}.
\end{equation*}
%
The set of all dual frames of $\phi$ is in the form of $D_{\phi}=\{g_{1},g_{2},g_{3},g_{4}\}$ where
\begin{equation*}
g_{1}= \left[
 \begin{array}{ccc}

\alpha(1+\alpha_{2}^{2}+\alpha_{3}^{2})-\alpha_{1}x\\
\\
-\alpha\alpha_{1}\alpha_{2}-\alpha_{1}y \\
\\
-\alpha\alpha_{1}\alpha_{3}-\alpha_{1}z\\
\end{array} \right], g_{2}=\left[
 \begin{array}{ccc}

-\alpha\alpha_{1}\alpha_{2}-\alpha_{2}x\\\
\\
\alpha(1+\alpha_{1}^{2}+\alpha_{3}^{2})-\alpha_{2}y \\
\\
-\alpha\alpha_{2}\alpha_{3}-\alpha_{2}z\\
\end{array} \right],\end{equation*}
\begin{equation*}
g_{3}=\left[
 \begin{array}{ccc}

-\alpha\alpha_{1}\alpha_{3}-\alpha_{3}x\\
\\
-\alpha\alpha_{2}\alpha_{3}-\alpha_{3}y\\
\\
\alpha(1+\alpha_{1}^{2}+\alpha_{3}^{2})-\alpha_{3}z\\
\end{array} \right] , g_{4}=\left[
 \begin{array}{ccc}

\alpha\alpha_{1}+x\\
\\
\alpha\alpha_{2}+y\\
\\
\alpha\alpha_{3}+z
\end{array} \right]
\end{equation*}
where $\alpha=\dfrac{1}{1+\alpha_{1}^{2}+\alpha_{2}^{2}+\alpha_{3}^{2}}$ and $ x,y,z\in \mathbb{R}$ are arbitrary.  All dual frames of $\Phi$ are full spark except the cases $ det [g_{i}\vert  g_{j} \vert  g_{k}]=0$, for $i,j,k\in I_{4}$,
 in which $(x,y,z)$ is belong to the four distinct planes $x=-\alpha\alpha_{1}$, $y=-\alpha\alpha_{2}$, $z=-\alpha\alpha_{3}$ and $\alpha_{1}x+\alpha_{2}y+\alpha_{3}z=\alpha$. In a similar way, by choosing
\begin{equation*}
(x,y,z)\in \mathbb{R}^{3}\setminus \{(x,y,z) : (x+\alpha\alpha_{1})(y+\alpha\alpha_{2})(z+\alpha\alpha_{3})(\alpha_{1}x+\alpha_{2}y+\alpha_{3}z-\alpha)=0\}
\end{equation*}
 we can say that  full spark dual frames  are translations of the canonical dual by a generic choice of the family $\{u_{i}\}_{i\in I_{4}}$.
\end{ex}

\begin{ex}
Consider $\phi=\{\delta_{1},\delta_{2},\delta_{3},\sum_{i=1}^{3}\delta_{i}, \delta_{1}-\delta_{2}+\delta_{3}\}$ as a full spark frame for $\mathbb{R}^{3}$.
In this case, all  dual frames are  constructed by a generic choice of $\left[
 \begin{array}{ccc}

u_{4}\\
u_{5}  \\
\end{array} \right]\in \mathbb{R}^{6}$. In fact, by putting $u_{4}=[x_{1}\quad  y_{1} \quad z_{1}]^{T}$ and $u_{5}=[x_{2}\quad y_{2} \quad z_{2}]^{T}$,
 the set of all dual frames of $\phi$,  are given by
\begin{equation*}
g_{1}= \left[
 \begin{array}{ccc}

\dfrac{3}{5}-x_{1}-x_{2}\\
\\
-y_{1}-y_{2} \\
\\
\dfrac{-2}{5}-z_{1}-z_{2}\\
\end{array} \right], g_{2}=\left[
 \begin{array}{ccc}

x_{2}-x_{1}\\\
\\
\dfrac{1}{3}+y_{2}-y_{1} \\
\\
-z_{2}-z_{1}\\
\end{array} \right],\end{equation*}
\begin{equation*}
g_{3}=\left[
 \begin{array}{ccc}

\dfrac{-2}{5}-x_{1}-x_{2}\\
\\
-y_{1}-y_{2} \\
\\
\dfrac{3}{5}-z_{1}-z_{2}\\
\end{array} \right] , g_{4}=\left[
 \begin{array}{ccc}

\dfrac{1}{5}+x_{1}\\
\\
\dfrac{1}{3}+y_{1}\\
\\
\dfrac{1}{5}+z_{1}\\
\end{array} \right]
, g_{5}=\left[
 \begin{array}{ccc}

\dfrac{1}{5}+x_{1}\\
\\
\dfrac{-1}{3}+y_{2}\\
\\
\dfrac{1}{5}+z_{2}\\
\end{array} \right]
\end{equation*}
where $x_{1},x_{2},y_{1},y_{2},z_{1},z_{2}$ are obtained arbitrarily from $\mathbb{R}$. And all   cases in which a dual frame fails to do phase retrieval is associated to  the roots of a $6$-variable polynomial in $\mathbb{R}^{6}$ given by multiplying of the  polynomials as
$
 det [g_{i_{1}}\vert g_{i_{2}}\vert  g_{i_{3}}]  =0,
$
for all index set $\{i_{j}\}_{j=1}^{3}\subset I_{5}$.  As one case,  $det [g_{2}\vert  g_{4}  \vert  g_{5}]  =0$ deduces that
\begin{equation*}
z_{1}+x_{2}-\dfrac{3x_{1}}{5}-\dfrac{3z_{2}}{5}+3x_{2}z_{1}-3x_{1}z_{2}=0,
\end{equation*}
and the roots of this equation, which are associated to a family of non-phase retrieval dual frames, constitute a surface in $\mathbb{R}^{3}$.
\end{ex}

\section{Weak  Phase Retrieval}
The main result of this section is to    obtain  some  equivalent conditions on a family of vectors to do    phase  retrieval in terms of weak phase retrieval. This also derives a relationship between, phase, weak phase and norm  retrieval. First we present some properties of a family of vectors to do  weak phase retrieval.
\begin{prop}
Assume that $\Phi=\{\mathcal{\phi}_{i}\}_{i\in I_{m}}$ is a frame   in $\mathcal{H}_{n}$. Then  $\Phi$ does weak phase retrieval if and only if $P\Phi$ does weak phase retrieval, for every $2$-dimensional orthogonal projection  $P$ on $\mathcal{H}_{n}$.
\end{prop}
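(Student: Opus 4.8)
The plan is to reduce everything to a single algebraic identity relating inner products against $\phi_i$ and against $P\phi_i$, combined with the observation that any pair of vectors lives in some two-dimensional subspace. Throughout I read ``$P\Phi$ does weak phase retrieval'' as the statement that $\{P\phi_i\}_{i\in I_m}$, viewed as a frame for the two-dimensional space $W:=P\mathcal{H}_n$, does weak phase retrieval, where the phases of vectors $x,y\in W\subseteq\mathcal{H}_n$ are still measured in the ambient coordinates $x_i=\langle x,\delta_i\rangle$. (That $\{P\phi_i\}$ is indeed a frame for $W$ is the standard fact that the orthogonal projection of a frame is a frame for its range.)

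First I would record the key identity. Since $P$ is a self-adjoint idempotent, for every $x\in\mathcal{H}_n$ and every $i$ one has $\langle x,P\phi_i\rangle=\langle Px,\phi_i\rangle$; in particular, if $x\in W$ so that $Px=x$, then
\begin{equation*}
\langle x,P\phi_i\rangle=\langle x,\phi_i\rangle,\qquad i\in I_m.
\end{equation*}
Hence for $x,y\in W$ the two systems of magnitude conditions $|\langle x,P\phi_i\rangle|=|\langle y,P\phi_i\rangle|$ and $|\langle x,\phi_i\rangle|=|\langle y,\phi_i\rangle|$ are literally the same.

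For the forward implication I would assume $\Phi$ does weak phase retrieval, fix a two-dimensional projection $P$, and take $x,y\in W$ with $|\langle x,P\phi_i\rangle|=|\langle y,P\phi_i\rangle|$ for all $i$. By the identity this gives $|\langle x,\phi_i\rangle|=|\langle y,\phi_i\rangle|$ for all $i$, and since $x,y\in\mathcal{H}_n$ the weak phase retrieval of $\Phi$ forces $x$ and $y$ to weakly have the same phase; thus $P\Phi$ does weak phase retrieval. For the converse I would assume $P\Phi$ does weak phase retrieval for every two-dimensional $P$ and take $x,y\in\mathcal{H}_n$ with $|\langle x,\phi_i\rangle|=|\langle y,\phi_i\rangle|$ for all $i$. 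If $x=0$ then completeness of the frame forces $y=0$ and the conclusion is trivial, so I may assume $x,y\neq0$; I then choose any two-dimensional subspace $W\supseteq\operatorname{span}\{x,y\}$ and let $P$ be the orthogonal projection onto $W$. Applying the identity again turns the hypothesis into $|\langle x,P\phi_i\rangle|=|\langle y,P\phi_i\rangle|$, and the assumed weak phase retrieval of $P\Phi$ yields that $x,y$ weakly have the same phase, so $\Phi$ does weak phase retrieval.

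The only genuinely delicate point is conceptual rather than computational: one must fix the meaning of ``weakly have the same phase'' for the restricted family $P\Phi$ so that it agrees with the ambient notion on $\mathcal{H}_n$. Once this is pinned down (using the fixed basis $\{\delta_i\}$ applied to the genuine vectors $x,y\in W\subseteq\mathcal{H}_n$), both directions become immediate from the projection identity and the span trick. Minor caveats to dispatch in passing are the zero-vector case and the tacit assumption $n\geq2$, so that two-dimensional projections exist and any two vectors sit inside a two-dimensional subspace.
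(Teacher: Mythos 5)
Your proof is correct and follows essentially the same route as the paper: the converse is exactly the paper's argument (project onto $\mathrm{span}\{x,y\}$ and use $\langle x,P\phi_i\rangle=\langle Px,\phi_i\rangle$), while the forward direction, which the paper dismisses as ``simple to see'' with a citation, you make explicit via the same identity. Your extra care about the degenerate case where $\mathrm{span}\{x,y\}$ has dimension less than two, and about fixing the ambient-coordinate meaning of ``weakly have the same phase'' for vectors of $W$, only tightens details the paper leaves implicit.
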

\begin{proof}
 In case $\Phi$ does weak phase retrieval, it is simple to see that $P\Phi$ does  weak phase retrieval, for every  orthogonal projection  $P$ on $\mathcal{H}_{n}$, as well, see also \cite{Akrami}. Conversely,
let $x,y\in  \mathcal{H}_{n}$ and $\vert\langle x,\phi_{i}\rangle\vert =\vert\langle y, \phi_{i}\rangle\vert$. Then by the assumption that $P\Phi$ does weak  retrieval for the projection $P$ of $\mathcal{H}_{n}$ onto the  closed subspace  $span\{x,y\}$, implies that $  x$ and  $y$ weakly have the same phase.
\end{proof}
It is shown that \cite{Akrami} a weak phase retrieval family in $\mathbb{R}^{n}$ spans the space that  means  such a family constitutes a frame for  $\mathbb{R}^{n}$. In the complex case $\mathbb{C}^{n}$ there is no result available.  In the following, we present   sufficient condition in this regard which is also useful for the main result of this section.
\begin{prop}\label{3prop}
Let $\{\phi_{i}\}_{i\in I_{m}}$ be a family of vectors in $\mathbb{C}^{n}$ so that  $U\Phi$ does weak phase retrieval for every unitary operator $U$  on $\mathbb{C}^{n}$. Then $\{\phi_{i}\}_{i\in I_{m}}$ is a frame for $\mathbb{C}^{n}$.
\end{prop}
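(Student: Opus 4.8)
The plan is to argue by contraposition, using that in the finite-dimensional space $\mathbb{C}^{n}$ a (finite) family is a frame exactly when it spans. So I would assume that $\Phi=\{\phi_{i}\}_{i\in I_{m}}$ fails to span $\mathbb{C}^{n}$ and then exhibit a single unitary $U$ for which $U\Phi$ fails weak phase retrieval, contradicting the hypothesis. If $\Phi$ does not span, there is a unit vector $v$ with $v\perp\phi_{i}$ for all $i\in I_{m}$. The basic mechanism I would exploit is that adding any scalar multiple of $v$ leaves every magnitude measurement unchanged: since $\langle cv,\phi_{i}\rangle=0$, the vectors $p$ and $p+cv$ satisfy $|\langle p,\phi_{i}\rangle|=|\langle p+cv,\phi_{i}\rangle|$ for every $i$, so they are indistinguishable for $\Phi$.

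The subtlety is that ``weakly having the same phase'' is defined coordinatewise in the standard basis $\{\delta_{i}\}$, so the invisible direction $v$ must first be aligned with a coordinate axis; this is precisely where the assumption ``for every unitary $U$'' is used. Assuming $n\ge 2$, I would pick a unitary $U$ with $Uv=\delta_{2}$, take the vector $p$ determined by $Up=\delta_{1}+i\,\delta_{2}$, and set $q=p+v$. Then $x:=Up=\delta_{1}+i\,\delta_{2}$ and $y:=Uq=Up+Uv=\delta_{1}+(1+i)\delta_{2}$. Because $U$ is unitary and $v\perp\phi_{i}$, one computes $\langle Up,U\phi_{i}\rangle=\langle p,\phi_{i}\rangle=\langle p+v,\phi_{i}\rangle=\langle Uq,U\phi_{i}\rangle$, hence $|\langle x,U\phi_{i}\rangle|=|\langle y,U\phi_{i}\rangle|$ for all $i\in I_{m}$; so $x$ and $y$ are indistinguishable for the frame $U\Phi$.

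It then remains to check that $x$ and $y$ do not weakly have the same phase, which is a direct two-coordinate computation. In the first coordinate $\mathrm{phase}(x_{1})=\mathrm{phase}(y_{1})=1$ forces the unimodular constant $\alpha$ to equal $1$, whereas in the second coordinate $\mathrm{phase}(x_{2})=i$ and $\mathrm{phase}(y_{2})=e^{i\pi/4}$ force $\alpha=e^{i\pi/4}$; no single $\alpha$ with $|\alpha|=1$ can satisfy both. Thus $U\Phi$ fails weak phase retrieval, contradicting the hypothesis, and therefore $\Phi$ must span $\mathbb{C}^{n}$, i.e.\ it is a frame.

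The main obstacle---really the conceptual crux rather than a technical hurdle---is recognizing that the coordinatewise nature of weak phase makes any fixed frame too rigid to expose the failure, and that conjugating by a unitary rotating $v$ onto $\delta_{2}$ converts the hidden, measurement-invisible direction into a genuine clash of phases across two coordinates. A minor point to record is that the construction needs $n\ge 2$ (for $n=1$ a single-coordinate pair trivially shares a phase, so the statement should be read for $n\ge 2$); once two coordinates are available, the explicit pair above completes the argument.
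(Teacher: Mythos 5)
Your argument is correct and is essentially the paper's proof: both pass to the contrapositive, take a vector $v$ orthogonal to every $\phi_{i}$, conjugate by a unitary carrying $v$ onto a standard basis vector, and exhibit two vectors with identical measurements under $U\Phi$ whose coordinates force incompatible values of the unimodular constant (the paper uses the symmetric pair $\mu(\pm x+y)$, clashing $\alpha=1$ against $\alpha=-1$, while you use $p$ and $p+v$, clashing $\alpha=1$ against $\alpha=e^{i\pi/4}$). Your remark that the construction needs $n\ge 2$ is a fair caveat that the paper leaves implicit.
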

\begin{proof}
Assume that  there exists an element $x\in \{\phi_{i}\}_{i\in I_{m}}^{\perp}$, and      get an ONB for $\mathbb{C}^{n}$ as $\{e_{i}\}_{i\in I_{n}}$ with $e_{1}=\dfrac{x}{\Vert x \Vert}$. Also,   take $0\neq y\in span\{\phi_{i}\}_{i\in I_{m}}$ so there exists $\{\beta_{i}\}_{i=2}^{n}\subset \mathbb{C}$, with some non-zero elements  such that $y=\sum_{i=2}^{n}\beta_{i}e_{i}$. Define
\begin{equation*}
\mu: \mathbb{C}^{n}\rightarrow \mathbb{C}^{n}; \quad \mu (e_{i})=\delta_{i}
\end{equation*}
where $\{\delta_{i}\}_{i\in I_{n}}$  is the standard ONB of $\mathbb{C}^{n}$. Clearly $\mu$ is a unitary operator and
\begin{equation*}
\mu(x+y)=\mu(x)+\mu(y)=\Vert x\Vert \delta_{1}+\sum_{i=2}^{n}\beta_{i}\delta_{i}, \quad \mu(-x+y)=-\Vert x\Vert \delta_{1}+\sum_{i=2}^{n}\beta_{i}\delta_{i}.
\end{equation*}
On the other hand,
\begin{eqnarray*}
\vert \langle \mu (x+y),\mu \phi_{i}\rangle\vert
&=& \vert \langle x+y ,\phi_{i}\rangle\vert \\
&=& \vert \langle  -x+y ,  \phi_{i}\rangle\vert\\
&=& \vert \langle \mu (-x+y),\mu \phi_{i}\rangle\vert,
\end{eqnarray*}
for all $i \in I_{m}$. The assumption that  $\mu \Phi$ does weak phase retrieval deduces that  $\mu (x+y)$ and $\mu (-x+y)$ weakly have the same phase
that is impossible, unless $x=0$. Therefore, $\Phi$  is a frame for $\mathbb{C}^{n}$.
\end{proof}

 \begin{thm}
Let $\Phi=\{\mathcal{\phi}_{i}\}_{i\in I_{m}}$ be a family of vectors in $\mathcal{H}_{n}$. Then the followings are equivalent;
\begin{itemize}
\item[(i)]
$\Phi$ does phase retrieval.
\item[(ii)] $U\Phi$ does phase retrieval for every  invertible  operator $U$ on $\mathcal{H}_{n}$.
\item[(iii)] $U\Phi$ does norm   retrieval for every  invertible  operator $U$ on $\mathcal{H}_{n}$.
\item[(iv)] $U\Phi$ does weak phase retrieval for every  invertible  operator $U$  on $\mathcal{H}_{n}$.
\end{itemize}
\end{thm}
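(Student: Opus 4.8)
The plan is to establish $(i)\Leftrightarrow(ii)$ directly from Theorem \ref{1.3}, to dispatch $(ii)\Rightarrow(iii)$ and $(ii)\Rightarrow(iv)$ as the two ``easy'' implications, and then to close the circle by proving $(iii)\Rightarrow(i)$ and $(iv)\Rightarrow(i)$ by contraposition. Indeed, $(i)\Leftrightarrow(ii)$ is precisely the content of Theorem \ref{1.3}. For the easy directions, recall from Section 1 that phase retrieval implies both norm retrieval and weak phase retrieval; applying this to each frame $U\Phi$, which does phase retrieval for every invertible $U$ by $(ii)$, yields $(iii)$ and $(iv)$ at once. Thus all the work lies in the two converses, and I would organize both around a single ``bad pair'' supplied by the failure of phase retrieval: writing $x=(U^*)^{-1}f$ and $y=(U^*)^{-1}g$, one has $\langle x,U\phi_i\rangle=\langle f,\phi_i\rangle$ and $\langle y,U\phi_i\rangle=\langle g,\phi_i\rangle$, so a magnitude coincidence for $\Phi$ is transported to one for $U\Phi$.

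Before the converses I would dispose of the incomplete case, which is where one must be slightly careful. Suppose $\Phi$ does not span $\mathcal{H}_n$ and fix $0\neq w\perp\operatorname{span}\Phi$. Then for $U=I$ the pair $(w,0)$ has equal magnitudes against every $\phi_i$ while $\|w\|\neq0$, so $\Phi$ fails norm retrieval and $(iii)$ fails; similarly $(i)$ fails. For weak phase retrieval the pair $(w,0)$ is not a witness, so I instead invoke the spanning results: in $\mathbb{R}^n$ a weak phase retrieval family spans \cite{Akrami}, and in $\mathbb{C}^n$ Proposition \ref{3prop} shows that $U\Phi$ cannot do weak phase retrieval for all unitary $U$ unless $\Phi$ is a frame; either way $(iv)$ fails when $\Phi$ is incomplete. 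Hence in proving $(iii)\Rightarrow(i)$ and $(iv)\Rightarrow(i)$ by contraposition I may assume $\Phi$ is complete, and then any pair $f,g$ with $|\langle f,\phi_i\rangle|=|\langle g,\phi_i\rangle|$ for all $i$ but $f\not\sim g$ is automatically linearly independent: if $g=cf$ then completeness forces some $\langle f,\phi_i\rangle\neq0$, whence $|c|=1$ and $f\sim g$, a contradiction.

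For $(iii)\Rightarrow(i)$ I argue the contrapositive. Assuming $\Phi$ does not do phase retrieval, fix a bad pair $f,g$, which is linearly independent by the previous step. It suffices to produce an invertible $V=(U^*)^{-1}$ with $\|Vf\|\neq\|Vg\|$, for then $x=Vf$ and $y=Vg$ have equal magnitudes against $U\Phi$ yet different norms, so $U\Phi$ fails norm retrieval and $(iii)$ is contradicted. Such a $V$ is easy to build from linear independence: choosing $h$ with $\langle f,h\rangle\neq0=\langle g,h\rangle$ and setting $V=I+\langle\,\cdot\,,h\rangle w$ fixes $\|Vg\|=\|g\|$ while letting $\|Vf\|$ grow with $\|w\|$, and a standard rank-one invertibility criterion keeps $V$ invertible for suitable $w$.

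The hard part is $(iv)\Rightarrow(i)$, again by contraposition with the same linearly independent bad pair $f,g$. Weak phase retrieval is a much weaker and coordinate-dependent property, so the crux is to exploit the full freedom in $U$ to manufacture a coordinate-wise phase mismatch that a single unimodular constant cannot absorb. Since $f,g$ are linearly independent I can choose an invertible $V$ sending them to two explicit vectors whose relative phases disagree across coordinates: in $\mathbb{R}^n$ take $Vf=\delta_1+\delta_2$ and $Vg=\delta_1-\delta_2$ (sign agreement in the first coordinate, disagreement in the second), and in $\mathbb{C}^n$ take $Vf=\delta_1+\delta_2$ and $Vg=\delta_1+i\delta_2$ (relative phases $1$ and $-i$). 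With $U=(V^*)^{-1}$ the vectors $x=Vf$ and $y=Vg$ have equal magnitudes against $U\Phi$, yet by construction no single unimodular $\alpha$ satisfies $\operatorname{phase}(x_k)=\alpha\,\operatorname{phase}(y_k)$ on the common support; hence $x,y$ do not weakly have the same phase and $U\Phi$ fails weak phase retrieval, contradicting $(iv)$. The only remaining point is to check that such a $V$ exists, which follows by extending $\{f,g\}$ and the chosen target pair to bases and defining $V$ on them, and that the two target vectors realize a genuine phase mismatch rather than a global phase, which is immediate from the explicit choice.
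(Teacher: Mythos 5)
Your proposal is correct, and for the one implication that carries all the weight, $(iv)\Rightarrow(i)$, it takes a genuinely cleaner route than the paper. The paper's proof first uses Proposition \ref{3prop} to get that $\Phi$ is a frame, then uses the weak phase retrieval of $\Phi$ itself to align the phases of the bad pair $x,y$ via a unimodular $\theta$, and then splits into a case analysis on the supports (disjoint supports versus a common non-zero coordinate), in each case building $U$ out of $x\mapsto x-y$, $y\mapsto x+y$ (or $\theta x\mp\epsilon y$ with a carefully chosen $\epsilon$ squeezed between two coordinate ratios) and tracking coordinate phases. You bypass all of this: once the bad pair $f,g$ is linearly independent (which you correctly reduce to, handling the incomplete case separately via Proposition \ref{3prop} and the spanning result of \cite{Akrami}), you simply map $f,g$ by an invertible $V$ onto explicit targets $\delta_1+\delta_2$ and $\delta_1-\delta_2$ (or $\delta_1+i\delta_2$), whose coordinatewise phase mismatch is manifest, and take $U=(V^*)^{-1}$ so that the magnitude coincidence transports to $U\Phi$. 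This removes both the case analysis and the need to invoke weak phase retrieval of $\Phi$ itself before deriving the contradiction. A second, smaller difference: for the equivalence with norm retrieval the paper simply cites an external reference, whereas you give a short self-contained contrapositive argument by perturbing norms with a rank-one modification of the identity; your sketch of the invertibility of $I+\langle\cdot,h\rangle w$ is terse but standard and correct. The trade-off is that the paper's argument, for all its extra bookkeeping, stays closer to the intrinsic coordinates of $x$ and $y$, while yours leans on the full freedom of choosing $U$ among all invertible operators --- which is exactly what the hypothesis $(iv)$ grants, so nothing is lost.
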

\begin{proof}
The equivalency of $(i)$, $(ii)$, and $(iii)$ was proven in \cite{B-Cahill}.
Also, we clearly have $(i) \Rightarrow (ii)  \Rightarrow (iv)$.
So, it is sufficient to show that $(iv) \Rightarrow (i)$.
Assuming that $U\Phi$ does weak phase retrieval for all  invertible operators on $\mathcal{H}_{n}$, specially we imply that $\mathcal{\phi}$ does weak phase retrieval. Moreover, $\Phi$ is a frame for $\mathcal{H}_{n}$, i.e., $\Phi$ spans $\mathcal{H}_{n}$ by Proposition \ref{3prop}.
In the sequel, we are going to show that $\Phi$ yields phase retrieval. On the contrary, let  there exist non-zero elements $x$ and $y$ in $\mathcal{H}_{n}$ so that
\begin{equation}\label{eq41}
\vert \langle x,\phi_{i}\rangle\vert=\vert \langle y,\phi_{i}\rangle\vert, \quad  (i\in I_{m})
\end{equation}
 but $y \neq c x$ for any $\vert c\vert = 1$. Since $\Phi$ does weak phase retrieval, there exists  some $\vert \theta\vert=1$ so that $\theta phase(x_{j})=phase (y_{j})$
 for all $j\in I_{n}$. The assumption that $y\neq \theta x$ implies $\vert x_{j}\vert \neq  \vert y_{j}\vert $ for some $j\in I_{n}$. Since $\Phi$ is a frame, the equality in $(\ref{eq41})$ is non-zero for some $i$ then $y= cx$ immediately implies that  $x$ and $y$ are linearly independent and we can  consider a basis for $\mathcal{H}_{n}$ containing $x$ and $y$ as $\{x,y, e_{3}, ..., e_{n}\}$. We face the following   cases

Case $1$. If $x$ and $y$ are disjointly supported, then there exist no   non-zero common coordinates.  Without loss  of generality we  let   $x_{1}, y_{2} \neq 0$,  and so $x_{2}= y_{1} = 0$. Define $U:\mathcal{H}_{n}\rightarrow \mathcal{H}_{n}$ so that $Ux= x- y$, $Uy=  x+ y$ and $Ue_{k}=e_{k}$, $3\leq k\leq n$. Then, $U$ is a bounded invertible operator on $\mathcal{H}_{n}$ and
\begin{equation*}
\vert \langle Ux,(U^{-1})^{*}\phi_{i}\rangle\vert=\vert \langle Uy,(U^{-1})^{*}\phi_{i}\rangle\vert, \quad (i \in I_{m}),
\end{equation*}
 by $(\ref{eq41})$ and the invertibility of $U$. However, $Ux$ and $Uy$ have not weakly the same phase  due to $phase(U x)_{1}=phase (Uy)_{1}$ and  $ phase(U x)_{2}= e^{i\pi}phase (Uy)_{2}$.
This contradicts   the assumption that $(U^{-1})^{*}\Phi$ does  weak phase retrieval.

 Case $2$. Let $x$ and $y$ have one  non-zero  coordinate  in common in $i$th index. If $y_{l}= 0=x_{l}$ for all $l\neq i$ then by $(\ref{eq41})$ and using the assumption $\vert x_{i}\vert=\vert y_{i}\vert$, i.e., $y=\theta x$ that is a contradiction. Thus,
 there exists  $l\neq i$ so that $y_{l}\neq 0$ or $x_{l}\neq 0$. Without loss  of  generality we   let
 $y_{l}\neq 0$ and
 $\dfrac{\vert x_{l}\vert}{\vert y_{l}\vert}< \dfrac{\vert x_{i}\vert}{\vert y_{i}\vert}$.
In fact, if in all  common non-zero elements $\dfrac{\vert x_{i}\vert}{\vert y_{i}\vert}=c$ then $y=\dfrac{\theta}{c}x$, which contradicts by the assumption.

So, we get $\epsilon>0$ so that $\dfrac{\vert x_{l}\vert}{\vert y_{l}\vert}<\epsilon< \dfrac{\vert x_{i}\vert}{\vert y_{i}\vert}$.  Define
$U:\mathcal{H}_{n}\rightarrow \mathcal{H}_{n}$ so that $Ux=\theta x-\epsilon y$, $Uy=\theta x+\epsilon y$ and $Ue_{k}=e_{k}$, $3\leq k\leq n$.
 Moreover,
\begin{eqnarray*}
(Ux)_{j}=(\vert x_{j}\vert - \epsilon\vert y_{j}\vert)\alpha_{j}\theta, \quad (Uy)_{j}=(\vert x_{j}\vert +\epsilon \vert y_{j}\vert)\alpha_{j}\theta. \quad (j\in I_{n})
\end{eqnarray*}
where $\alpha_{j}=phase(x_{j})$. Hence, we obtain $phase (Ux)_{i}=phase (Uy)_{i}$, however $phase (Ux)_{l} =e^{i\pi}phase  (Uy)_{l}$.
 That leads to  a contradiction similar to the previous case, as required.


\end{proof}
The following result gives a sufficient condition on a   family of frames  so that their  canonical dual also yields weak phase retrieval.
\begin{prop}
Let $\Phi=\{\mathcal{\phi}_{i}\}_{i\in I_{m}}$ be a frame   in $\mathcal{H}_{n}$ with diagonal frame operator. Then $\Phi$ does weak phase retrieval if and only if its canonical dual does so.
\end{prop}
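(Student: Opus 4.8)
The plan is to exploit the fact that a positive diagonal operator leaves every coordinate phase (and the support) of a vector unchanged, so that passing to the canonical dual is invisible to the weak-phase equivalence relation. First I would record the structure of the frame operator: since $S_{\Phi}$ is diagonal and, being a frame operator, is positive definite and self-adjoint, I may write $S_{\Phi}=D=\mathrm{diag}(d_{1},\dots,d_{n})$ with each $d_{j}>0$. The canonical dual of $\Phi$ is then $\{S_{\Phi}^{-1}\phi_{i}\}_{i\in I_{m}}=\{D^{-1}\phi_{i}\}_{i\in I_{m}}$, and $D^{-1}=\mathrm{diag}(d_{1}^{-1},\dots,d_{n}^{-1})$ is again diagonal with strictly positive entries.

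The key observation, which I would isolate as the single fact requiring care, is that scaling by a positive diagonal matrix changes neither supports nor coordinate phases. Indeed, for any $z\in\mathcal{H}_{n}$ and any index $j$ one has $(Dz)_{j}=d_{j}z_{j}$ with $d_{j}>0$, so $(Dz)_{j}\neq 0$ if and only if $z_{j}\neq 0$, and $\mathrm{phase}((Dz)_{j})=\mathrm{phase}(z_{j})$. Consequently, for any two vectors $z,w\in\mathcal{H}_{n}$, the pair $Dz,Dw$ weakly has the same phase if and only if $z,w$ does, and the identical statement holds for $D^{-1}$. This follows immediately from the definition of weak equality of phase, since that definition only constrains the common non-zero coordinates and $D^{\pm 1}$ preserves both the phase of each such coordinate and the set of indices where both vectors are non-zero.

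With this in hand the equivalence reduces to a short substitution. Suppose $\Phi$ does weak phase retrieval and let $x,y$ satisfy $\vert\langle x,D^{-1}\phi_{i}\rangle\vert=\vert\langle y,D^{-1}\phi_{i}\rangle\vert$ for all $i\in I_{m}$. Since $D^{-1}$ is self-adjoint, $\langle x,D^{-1}\phi_{i}\rangle=\langle D^{-1}x,\phi_{i}\rangle$, so writing $u=D^{-1}x$ and $v=D^{-1}y$ gives $\vert\langle u,\phi_{i}\rangle\vert=\vert\langle v,\phi_{i}\rangle\vert$ for all $i$. Weak phase retrieval of $\Phi$ then forces $u,v$ to weakly have the same phase, and applying the key observation with $D$ shows that $x=Du$ and $y=Dv$ weakly have the same phase too; hence the canonical dual does weak phase retrieval. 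The converse is identical with the roles of $D$ and $D^{-1}$ interchanged; equivalently, one notes that $S_{D^{-1}\Phi}=D^{-1}S_{\Phi}D^{-1}=D^{-1}$, so $\Phi$ is itself the canonical dual of its canonical dual, and the same argument applies verbatim. I do not expect any genuine obstacle here: the whole content is the phase-preservation property of positive diagonal scaling, and once that is checked the rest is a formal substitution using self-adjointness of $D^{-1}$.
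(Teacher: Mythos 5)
Your proposal is correct and follows essentially the same route as the paper: both arguments move $S_{\Phi}^{-1}$ across the inner product, invoke weak phase retrieval of $\Phi$ for the transformed vectors, and then use the fact that a positive diagonal matrix preserves coordinate supports and phases to transfer the conclusion back. The only difference is that you spell out explicitly the phase- and support-preservation of $D^{\pm 1}$ and the self-adjointness step, which the paper leaves implicit.
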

\begin{proof}
Suppose that $\alpha_{1}, ..., \alpha_{n}$ be the diagonal elements of $S_{\Phi}$, respectively.
Obviously, $\alpha_{i}> 0$, for all $i\in I_{n}$. Let $\Phi$ does weak phase retrieval, take $x,y\in \mathcal{H}_{n}$ such that $\vert\langle x, S_{\Phi}^{-1}\phi_{i}\rangle\vert=\vert\langle y, S_{\Phi}^{-1}\phi_{i}\rangle\vert$. Then,  we get
$S_{\Phi}^{-1}x=(x_{1}/ \alpha_{1},...,x_{n}/ \alpha_{n})$ and $S_{\Phi}^{-1}y=(y_{1}/\alpha_{1},..., y_{n}/\alpha_{n})$ weakly have the same phase. Consequently, $x$ and $y$ weakly have the same phase, as well. The converse is implied by a similar explanation.
\end{proof}

\textbf{Funding.} The authors have not disclosed any funding.\\

\textbf{Data Availability Statement.}  Data sharing not applicable to this article as no
datasets were generated during the preparation of this paper.\\

\textbf{Conflict-of-interest.} This work does not have any conflicts of interest.

\bibliographystyle{amsplain}



\end{document}